\newcommand{\coxSegment}[6]{

	\pgfmathsetmacro{\Xa}{#1}
	\pgfmathsetmacro{\Xb}{#2}
	\pgfmathsetmacro{\Xc}{#3}
	\pgfmathsetmacro{\Xd}{#4}

	\draw (\Xa - \factor*\Xc + \factor*\Xa,\Xb  - \factor*\Xd + \factor*\Xb) -- (\Xc + \factor*\Xc - \factor*\Xa, \Xd + \factor*\Xd - \factor*\Xb);
	\foreach \y in {1,2,...,#5}{
		\pgfmathsetmacro{\Xf}{random()}
		\pgfmathsetmacro{\Xg}{1- \Xf)}
		\pgfmathsetmacro{\Xh}{\Xa*\Xf + \Xc*\Xg}
		\pgfmathsetmacro{\Xi}{\Xb*\Xf+ \Xd*\Xg}
		\pgfmathparse{\Xh*\Xh + \Xi*\Xi > #6 ? 1: 0}
		\ifthenelse{\pgfmathresult>0}{\fill (\Xh, \Xi) circle (2pt);}{}
	}

}
\newcommand{\coxSegmentBoxes}[3]{
	\foreach \y in {1,2,...,#1}{
		\pgfmathsetmacro{\Yf}{random()}
		\pgfmathsetmacro{\Yg}{random()}
		\pgfmathparse{\Yf*\Yf + \Yg*\Yg > #2/(#3*#3) ? 1: 0}
		\ifthenelse{\pgfmathresult>0}{\fill[gray] (#3*\Yf, #3*\Yg) circle (2pt);}{}	
	}
		\foreach \y in {1,2,...,#1}{
		\pgfmathsetmacro{\Yf}{random()}
		\pgfmathsetmacro{\Yg}{-random()}
		\pgfmathparse{\Yf*\Yf + \Yg*\Yg > #2/(#3*#3) ? 1: 0}
		\ifthenelse{\pgfmathresult>0}{\fill[gray] (#3*\Yf, #3*\Yg) circle (2pt);}{}	
	}
		\foreach \y in {1,2,...,#1}{
		\pgfmathsetmacro{\Yf}{-random()}
		\pgfmathsetmacro{\Yg}{random()}
		\pgfmathparse{\Yf*\Yf + \Yg*\Yg > #2/(#3*#3) ? 1: 0}
		\ifthenelse{\pgfmathresult>0}{\fill[gray] (#3*\Yf, #3*\Yg) circle (2pt);}{}	
	}
		\foreach \y in {1,2,...,#1}{
		\pgfmathsetmacro{\Yf}{-random()}
		\pgfmathsetmacro{\Yg}{-random()}
		\pgfmathparse{\Yf*\Yf + \Yg*\Yg > #2/(#3*#3) ? 1: 0}
		\ifthenelse{\pgfmathresult>0}{\fill[gray] (#3*\Yf, #3*\Yg) circle (2pt);}{}	
	}

}
\newcommand{\coxSegmentBoxesG}[3]{
	\foreach \y in {1,2,...,#1}{
		\pgfmathsetmacro{\Yf}{random()}
		\pgfmathsetmacro{\Yg}{random()}
		\pgfmathparse{\Yf*\Yf + \Yg*\Yg > #2/(#3*#3) ? 1: 0}
		\ifthenelse{\pgfmathresult>0}{\pgcircle{#3*\Yf}{#3*\Yg}{0.2cm};}{}	
	}
		\foreach \y in {1,2,...,#1}{
		\pgfmathsetmacro{\Yf}{random()}
		\pgfmathsetmacro{\Yg}{-random()}
		\pgfmathparse{\Yf*\Yf + \Yg*\Yg > #2/(#3*#3) ? 1: 0}
		\ifthenelse{\pgfmathresult>0}{\pgcircle{#3*\Yf}{#3*\Yg}{0.2cm};}{}	
	}
		\foreach \y in {1,2,...,#1}{
		\pgfmathsetmacro{\Yf}{-random()}
		\pgfmathsetmacro{\Yg}{random()}
		\pgfmathparse{\Yf*\Yf + \Yg*\Yg > #2/(#3*#3) ? 1: 0}
		\ifthenelse{\pgfmathresult>0}{\pgcircle{#3*\Yf}{#3*\Yg}{0.2cm};}{}	
	}
		\foreach \y in {1,2,...,#1}{
		\pgfmathsetmacro{\Yf}{-random()}
		\pgfmathsetmacro{\Yg}{-random()}
		\pgfmathparse{\Yf*\Yf + \Yg*\Yg > #2/(#3*#3) ? 1: 0}
		\ifthenelse{\pgfmathresult>0}{\pgcircle{#3*\Yf}{#3*\Yg}{0.2cm};}{}	
	}

}
\newcommand{\E}{\mathbb{E}} 
\newcommand{\Z}{\mathbb{Z}}
\newcommand{\R}{\mathbb{R}}
\def\eq{\begin{equation}}
\def\en{\end{equation}}
\newtheorem{theorem}{Theorem}[section]
\newtheorem{corollary}[theorem]{Corollary}
\newtheorem{lemma}[theorem]{Lemma}
\newtheorem{proposition}[theorem]{Proposition}
\theoremstyle{definition}
\newtheorem{definition}[theorem]{Definition}
\newtheorem{example}[theorem]{Example}
    \def\Q{{\Bbb Q}}
    \def\e{{\varepsilon}}
    \def\dist{{\rm dist}}
    \def\D{\Delta}
    \def\a{\alpha}
    \def\sm{\setminus}
    \def\e{\varepsilon}
    \def\phi{\varphi}
    \def\g{\gamma}
    \def\la{\lambda}
    \def\k{\kappa}
    \def\r{\rho}
    \def\de{\delta}
    \def\D{\Delta}
    \def\L{\Lambda}
    \def\P{{\Phi}}
    \def\T{\T}
    \def\X{\Xi}
    \def\C{{\mc C}}
    \def\V|{{\Vert}}
    \def\supp{\text{supp}}
    \def\essinf{\text{\rm ess-inf }}
    \def\d{{\rm d}}
    \def\E{\mathbb{E}}
    \def\V{\mathbb{V}}
    \def\es{\emptyset}
    \def\one{\mathbbmss{1}}
    \def\mc{\mathcal}
    \def\ms{\mathsf}
    \def\one{\mathbbmss{1}}
    \def\P{\mathbb{P}}
    \def\R{\mathbb{R}}
    \def\Z{\mathbb{Z}}
    \def\vXs{X_{\ms{S}}}
     \def\vXsi{X_{\ms{S}, i}}
    \def\vXl{X^{\lambda}}
    \def\vXls{X^{\lambda,*}}
    \def\vls{\lambda_{\ms{S}}}
    \def\vlc{\lambda_{c}}
    \def\vG{g_r}
    \def\Ce{\mc{C}^{\ms{ext}}_r}
    \def\Cee{\mc{C}^{\ms{ext}}_{r, \e}}
    \def\too{\rightarrow}
    \renewcommand{\to}{\uparrow}
    \def\tod{\downarrow}
\keywords{Cox processes, percolation, stabilization, large deviations}
\subjclass[2010]{Primary 60F10; secondary 60K35}
\begin{document}
\author{Christian Hirsch}
\address[Christian Hirsch]{Mathematisches Institut, Ludwig-Maximilians-Universit\"at M\"unchen, 80333 Munich, Germany}
\email{hirsch@math.lmu.de} 
\author{Benedikt Jahnel}
\address[Benedikt Jahnel]{Weierstrass Institute for Applied Analysis and Stochastics, Mohrenstra\ss e 39, 10117 Berlin, Germany}
              \email{benedikt.jahnel@wias-berlin.de} 
\author{Elie Cali}
\address[Elie Cali]{Orange SA, 44 Avenue de la R\'epublique, 92326 Ch\^atillon, France}
              \email{elie.cali@orange.com} 

\title{Continuum percolation for Cox point processes}

\date{\today}

\begin{abstract}
We investigate continuum percolation for Cox point processes, that is, Poisson point processes driven by random intensity measures. First, we derive sufficient conditions for the existence of non-trivial sub- and super-critical percolation regimes based on the notion of stabilization. Second, we give asymptotic expressions for the percolation probability in large-radius, high-density and coupled regimes. 
In some regimes, we find universality, whereas in others, a sensitive dependence on the underlying random intensity measure survives. 
\end{abstract}

\maketitle

\section{Introduction}
Bernoulli bond percolation is one of the most prototypical models for the occurrence of phase transitions. Additionally, as of today, the continuum version of percolation where connections are formed according to distances in a spatial point process, has been investigated intensely in the Poisson case. More recently, the community has started to look at point processes that go far beyond the simplistic Poisson model. In particular, this includes sub-Poisson~\cite{bartekPerc1, bartekPerc2}, Ginibre~\cite{ginibrePerc} and Gibbsian point processes~\cite{gibbsPerc2,gibbsPerc1}.

Another stream of research that brought forward a variety of surprising results is the investigation of percolation processes living in a random environment. The seminal work on the critical probability for Voronoi percolation showed that dealing with random environments often requires the development of fundamental new methodological tools~\cite{vorPerc1,vorPerc2,vorPerc3}. Additionally, recent work on percolation in unimodular random graphs also revealed that fundamental properties of percolation on transitive graphs fail to carry over to the setting of random environments~\cite{unim}.

In light of these developments it comes as a surprise that continuum percolation for Cox point processes, i.e., Poisson point processes in a random environment,   have so far not been studied systematically. In this paper, we rectify this omission by providing conditions for the existence of a phase transition and by investigating the asymptotic behavior of the percolation probability in a number of different limiting regimes. 

\medskip
In addition to this mathematical motivation, our results have applications in the domain of telecommunication. Here, Cox processes are commonly employed for modelling various kinds of networks \cite[Chapter 5]{spodarev2013stochastic}. More precisely, for modelling the deployment of a telecommunication network, various random tessellation models for different types of street systems have been developed and tested against real data~\cite{gloaguen2006fitting}. The main idea of these models is to generate a random tessellation, with the same average characteristics as the street system, based on a planar Poisson point process. This could be a Voronoi, or Delaunay, or line tessellation, or it could be a more involved model like a nested tessellation \cite{delaunay}.

Once the street system is modelled, it is possible to add wireless users along the streets. The simplest way to do that is to use a linear Poisson point process along the streets. This will give rise to a Cox process. Building the Gilbert graph, i.e., drawing an edge between any two users with distance less than a given connection radius, one can obtain a very simplified model of users communicating via a Device-to-Device mechanism. Then, studying the percolation of this random graph, one could obtain results on the connectivity of the wireless network.

\medskip
The main results in this paper fall into two large categories: existence of phase transition and asymptotic analysis of percolation probabilities. First, we show that a variant of the celebrated concept of stabilization~\cite{yukLDP,normGeomProb, stab1, stab2, stab3} suffices to guarantee the existence of a sub-critical phase. In contrast, for the existence of a super-critical phase, stabilization alone is not enough since percolation is impossible unless the support of the random measure has sufficiently good connectivity properties itself. Hence, our proof for the existence of a super-critical phase relies on a variant of the notion of {asymptotic essential connectedness} from~\cite{aldous}.

Second, when considering the Poisson point process, the high-density or large-radius limit of the percolation probability tends to 1 exponentially fast and is governed by the isolation probability. In the random environment, the picture is more subtle since the regime of a large radius is no longer equivalent to that of a high density. Since we rely on a refined large-deviation analysis, we assume that the random environment is not only stabilizing, but in fact $b$-dependent. 

Since the high-density and the large-radius limit are no longer equivalent, this opens up the door to an analysis of coupled limits. As we shall see, the regime of a large radius and low density is of highly averaging nature and therefore results in a universal limiting behavior. On the other hand, in the converse limit the geometric structure of the random environment remains visible in the limit. In particular, a different scaling balance between the radius and density is needed when dealing with absolutely continuous and singular random measures, respectively. Finally, we illustrate our results with specific examples and simulations.

\section{Model definition and main results} 
Loosely speaking, Cox point processes are Poisson point processes in a random environment. More precisely, the random environment is given by a random element $\L$ in the space $\mathbb{M}$ of Borel measures on $\R^d$ equipped with the usual evaluation $\sigma$-algebra. 
Throughout the manuscript we assume that $\L$ is stationary, but at this point we do not impose any additional conditions. In particular, $\L$ could be an absolutely continuous or singular random intensity measure. Nevertheless, in some of the presented results, completely different behavior will appear. 
\begin{example}[Absolutely continuous environment]\label{Ex1}
    Let $\L(\d x) = \ell_x\d x$ with $\ell = \{\ell_x\}_{x \in \R^d}$ a stationary non-negative random field. For example, this includes random measures modulated by a random closed set $\Xi$, \cite[Section 5.2.2]{stoyanstochastic}. Here, $\ell_x=\la_1\one\{x\in \Xi\}+\la_2\one\{x\not\in \Xi\}$ with $\la_1,\la_2\ge0$. Another example are random measures induced by shot-noise fields, \cite[Section 5.6]{stoyanstochastic}. Here, $\ell_x=\sum_{X_i\in \vXs}k(x-X_i)$ for some non-negative integrable kernel $k:\, \R^d \to [0,\infty)$ with compact support and $\vXs$ a Poisson point process.
\end{example}

\begin{example}[Singular environment]\label{Ex2}
Let $\L = \nu_1(S\cap \d x)$ where $\nu_1$ denotes the one-dimensional Hausdorff measure and $S$ is a stationary segment process in $\R^d$. That is, $S$ is a stationary point process in the space of line segments~\cite[Chapter 8]{stoyanstochastic}. For example consider $S$ to be a Poisson-Voronoi, Poisson-Delaunay or a Poisson line  tessellation. 
\end{example}

Then, let $\vXl$ be a Cox process in $\R^d$ with stationary intensity measure $\la\L$ where $\la>0$ and $\E[\L([0,1]^d)] = 1$. That is, conditioned on $\L$, the point process $\vXl$ is a Poisson point process with intensity measure $\la\L$, see Figure~\ref{rlaFig}. 

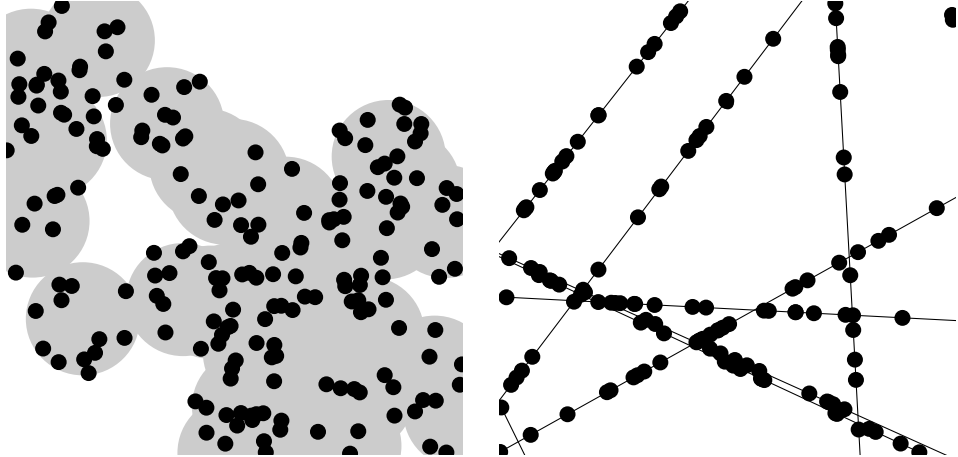
\begin{figure}[!htpb]
    \begin{tikzpicture}[scale = 1.5]
    \clip (-2,-2) rectangle (2,2);
    \fill[black!20!white]  (0.676,-0.461) circle (.5cm);
    \fill[black!20!white]  (-1.330,-0.804) circle (.5cm);
    \fill[black!20!white]  (-1.785,1.437) circle (.5cm);
    \fill[black!20!white]  (-0.450,-0.634) circle (.5cm);
    \fill[black!20!white]  (-0.082,0.351) circle (.5cm);
    \fill[black!20!white]  (-0.247,0.556) circle (.5cm);
    \fill[black!20!white]  (1.756,-1.276) circle (.5cm);
    \fill[black!20!white]  (1.524,0.216) circle (.5cm);
    \fill[black!20!white]  (1.353,0.630) circle (.5cm);
    \fill[black!20!white]  (-1.771,0.060) circle (.5cm);
    \fill[black!20!white]  (0.223,-1.103) circle (.5cm);
    \fill[black!20!white]  (1.484,0.368) circle (.5cm);
    \fill[black!20!white]  (1.322,0.044) circle (.5cm);
    \fill[black!20!white]  (-1.619,0.772) circle (.5cm);
    \fill[black!20!white]  (-0.591,0.920) circle (.5cm);
    \fill[black!20!white]  (1.075,-1.489) circle (.5cm);
    \fill[black!20!white]  (-1.198,1.656) circle (.5cm);
    \fill[black!20!white]  (-0.018,0.463) circle (.5cm);
    \fill[black!20!white]  (0.127,-1.581) circle (.5cm);
    \fill[black!20!white]  (1.166,-0.929) circle (.5cm);
    \fill[black!20!white]  (0.786,-0.071) circle (.5cm);
    \fill[black!20!white]  (-1.929,0.168) circle (.5cm);
    \fill[black!20!white]  (0.000,-1.989) circle (.5cm);
    \fill[black!20!white]  (-0.210,-0.657) circle (.5cm);
    \fill[black!20!white]  (1.851,0.056) circle (.5cm);
    \fill[black!20!white]  (0.965,-1.927) circle (.5cm);
    \fill[black!20!white]  (1.981,-1.625) circle (.5cm);
    \fill[black!20!white]  (0.664,-1.402) circle (.5cm);
    \fill[black!20!white]  (0.449,0.129) circle (.5cm);
    \fill[black!20!white]  (-0.003,-0.628) circle (.5cm);
    \fill  (-0.181,-0.827) circle (2pt);
    \fill  (1.584,-1.623) circle (2pt);
    \fill  (0.402,-1.785) circle (2pt);
    \fill  (1.496,1.062) circle (2pt);
    \fill  (-1.542,-1.187) circle (2pt);
    \fill  (-1.577,0.281) circle (2pt);
    \fill  (-0.966,1.309) circle (2pt);
    \fill  (1.655,-1.522) circle (2pt);
    \fill  (0.962,-0.459) circle (2pt);
    \fill  (1.633,0.835) circle (2pt);
    \fill  (-0.012,-0.723) circle (2pt);
    \fill  (-1.535,-0.502) circle (2pt);
    \fill  (-1.223,-1.105) circle (2pt);
    \fill  (-1.354,1.425) circle (2pt);
    \fill  (-1.523,1.204) circle (2pt);
    \fill  (0.339,-0.410) circle (2pt);
    \fill  (1.258,0.536) circle (2pt);
    \fill  (0.068,-0.412) circle (2pt);
    \fill  (-0.021,-1.243) circle (2pt);
    \fill  (-0.294,-1.069) circle (2pt);
    \fill  (-1.359,1.393) circle (2pt);
    \fill  (1.717,-1.934) circle (2pt);
    \fill  (-0.225,-0.303) circle (2pt);
    \fill  (0.147,-1.660) circle (2pt);
    \fill  (-1.517,-0.640) circle (2pt);
    \fill  (1.166,0.326) circle (2pt);
    \fill  (-1.185,-0.984) circle (2pt);
    \fill  (0.538,-0.429) circle (2pt);
    \fill  (1.394,-1.409) circle (2pt);
    \fill  (0.922,0.249) circle (2pt);
    \fill  (1.337,-0.003) circle (2pt);
    \fill  (-0.034,-0.867) circle (2pt);
    \fill  (-0.539,0.975) circle (2pt);
    \fill  (1.318,-1.302) circle (2pt);
    \fill  (0.058,0.024) circle (2pt);
    \fill  (0.210,0.028) circle (2pt);
    \fill  (-1.494,1.000) circle (2pt);
    \fill  (-0.451,-0.208) circle (2pt);
    \fill  (-1.026,1.773) circle (2pt);
    \fill  (-0.570,-0.400) circle (2pt);
    \fill  (-1.514,1.961) circle (2pt);
    \fill  (-1.887,1.270) circle (2pt);
    \fill  (0.708,-0.613) circle (2pt);
    \fill  (1.733,-0.188) circle (2pt);
    \fill  (0.872,0.076) circle (2pt);
    \fill  (0.208,0.385) circle (2pt);
    \fill  (0.510,-0.727) circle (2pt);
    \fill  (-1.244,1.164) circle (2pt);
    \fill  (-1.736,1.257) circle (2pt);
    \fill  (1.762,-0.903) circle (2pt);
    \fill  (0.145,-0.079) circle (2pt);
    \fill  (0.412,-1.704) circle (2pt);
    \fill  (-0.059,-0.879) circle (2pt);
    \fill  (-0.109,-0.946) circle (2pt);
    \fill  (1.457,0.215) circle (2pt);
    \fill  (0.411,-0.689) circle (2pt);
    \fill  (-1.865,0.906) circle (2pt);
    \fill  (-0.101,0.207) circle (2pt);
    \fill  (1.285,-0.265) circle (2pt);
    \fill  (1.859,-1.986) circle (2pt);
    \fill  (1.051,-1.425) circle (2pt);
    \fill  (-1.207,0.721) circle (2pt);
    \fill  (-1.678,-1.067) circle (2pt);
    \fill  (0.131,-0.398) circle (2pt);
    \fill  (-1.318,-1.163) circle (2pt);
    \fill  (-1.861,0.027) circle (2pt);
    \fill  (1.429,0.632) circle (2pt);
    \fill  (1.402,0.443) circle (2pt);
    \fill  (0.057,-1.637) circle (2pt);
    \fill  (-1.371,0.355) circle (2pt);
    \fill  (-1.743,-0.735) circle (2pt);
    \fill  (0.926,0.395) circle (2pt);
    \fill  (-0.624,-0.673) circle (2pt);
    \fill  (-1.139,1.737) circle (2pt);
    \fill  (-0.174,0.070) circle (2pt);
    \fill  (0.328,-1.145) circle (2pt);
    \fill  (0.946,-0.110) circle (2pt);
    \fill  (1.977,-1.387) circle (2pt);
    \fill  (1.111,-0.423) circle (2pt);
    \fill  (1.148,0.731) circle (2pt);
    \fill  (1.490,0.918) circle (2pt);
    \fill  (1.029,-0.651) circle (2pt);
    \fill  (-0.952,-0.560) circle (2pt);
    \fill  (-0.451,0.789) circle (2pt);
    \fill  (0.579,-0.174) circle (2pt);
    \fill  (0.157,-1.699) circle (2pt);
    \fill  (1.330,0.274) circle (2pt);
    \fill  (1.176,-0.720) circle (2pt);
    \fill  (1.432,0.135) circle (2pt);
    \fill  (1.110,-0.746) circle (2pt);
    \fill  (1.014,-1.996) circle (2pt);
    \fill  (0.036,0.242) circle (2pt);
    \fill  (0.831,0.068) circle (2pt);
    \fill  (-1.782,0.812) circle (2pt);
    \fill  (0.253,-1.639) circle (2pt);
    \fill  (0.972,-0.516) circle (2pt);
    \fill  (-1.721,1.081) circle (2pt);
    \fill  (0.348,-1.356) circle (2pt);
    \fill  (-0.033,-1.333) circle (2pt);
    \fill  (-1.427,-0.514) circle (2pt);
    \fill  (-0.304,1.292) circle (2pt);
    \fill  (0.024,-1.748) circle (2pt);
    \fill  (0.192,-0.441) circle (2pt);
    \fill  (1.953,0.076) circle (2pt);
    \fill  (-0.105,-0.446) circle (2pt);
    \fill  (-0.395,-0.162) circle (2pt);
    \fill  (1.601,0.439) circle (2pt);
    \fill  (-0.312,0.281) circle (2pt);
    \fill  (-1.553,0.292) circle (2pt);
    \fill  (-1.155,0.698) circle (2pt);
    \fill  (1.862,0.352) circle (2pt);
    \fill  (0.269,-0.807) circle (2pt);
    \fill  (-0.727,1.176) circle (2pt);
    \fill  (-0.070,-1.655) circle (2pt);
    \fill  (-0.080,-1.908) circle (2pt);
    \fill  (0.807,-1.383) circle (2pt);
    \fill  (-0.245,-1.811) circle (2pt);
    \fill  (-0.471,0.475) circle (2pt);
    \fill  (0.192,-1.647) circle (2pt);
    \fill  (-1.205,0.785) circle (2pt);
    \fill  (-1.895,1.158) circle (2pt);
    \fill  (1.450,1.089) circle (2pt);
    \fill  (0.419,-0.222) circle (2pt);
    \fill  (0.833,0.047) circle (2pt);
    \fill  (0.194,-1.018) circle (2pt);
    \fill  (-0.609,0.999) circle (2pt);
    \fill  (0.958,0.097) circle (2pt);
    \fill  (0.733,-1.803) circle (2pt);
    \fill  (1.471,0.186) circle (2pt);
    \fill  (1.086,-0.640) circle (2pt);
    \fill  (1.099,-1.455) circle (2pt);
    \fill  (1.795,-0.433) circle (2pt);
    \fill  (0.971,0.792) circle (2pt);
    \fill  (-1.040,1.086) circle (2pt);
    \fill  (1.711,-1.139) circle (2pt);
    \fill  (-1.386,0.874) circle (2pt);
    \fill  (1.934,-0.362) circle (2pt);
    \fill  (-1.753,0.214) circle (2pt);
    \fill  (1.639,0.918) circle (2pt);
    \fill  (-1.998,0.685) circle (2pt);
    \fill  (-1.734,1.266) circle (2pt);
    \fill  (0.366,-1.132) circle (2pt);
    \fill  (0.349,-0.692) circle (2pt);
    \fill  (-0.632,0.728) circle (2pt);
    \fill  (-0.818,0.801) circle (2pt);
    \fill  (0.932,-1.418) circle (2pt);
    \fill  (-0.605,-0.925) circle (2pt);
    \fill  (0.618,-0.608) circle (2pt);
    \fill  (0.923,0.860) circle (2pt);
    \fill  (-1.279,-1.284) circle (2pt);
    \fill  (1.949,0.299) circle (2pt);
    \fill  (-1.234,0.985) circle (2pt);
    \fill  (-0.431,0.809) circle (2pt);
    \fill  (0.611,0.132) circle (2pt);
    \fill  (-0.141,-1.025) circle (2pt);
    \fill  (-1.901,1.496) circle (2pt);
    \fill  (-0.696,-0.422) circle (2pt);
    \fill  (-1.128,1.560) circle (2pt);
    \fill  (1.996,-1.207) circle (2pt);
    \fill  (1.404,-1.660) circle (2pt);
    \fill  (-1.661,1.736) circle (2pt);
    \fill  (0.350,-1.035) circle (2pt);
    \fill  (0.259,-1.886) circle (2pt);
    \fill  (-0.681,-0.601) circle (2pt);
    \fill  (1.100,-0.505) circle (2pt);
    \fill  (-0.441,1.244) circle (2pt);
    \fill  (-0.964,-0.973) circle (2pt);
    \fill  (-0.808,0.860) circle (2pt);
    \fill  (-0.161,-0.444) circle (2pt);
    \fill  (1.318,0.568) circle (2pt);
    \fill  (0.185,0.667) circle (2pt);
    \fill  (0.275,-1.985) circle (2pt);
    \fill  (-1.592,-0.013) circle (2pt);
    \fill  (-1.668,1.361) circle (2pt);
    \fill  (-0.654,0.743) circle (2pt);
    \fill  (-0.342,-1.534) circle (2pt);
    \fill  (1.584,0.763) circle (2pt);
    \fill  (-1.917,-0.396) circle (2pt);
    \fill  (1.086,-1.798) circle (2pt);
    \fill  (1.329,-0.636) circle (2pt);
    \fill  (1.766,-1.528) circle (2pt);
    \fill  (-0.131,-0.553) circle (2pt);
    \fill  (1.300,-0.437) circle (2pt);
    \fill  (0.011,-1.172) circle (2pt);
    \fill  (-0.246,-1.590) circle (2pt);
    \fill  (-0.706,-0.221) circle (2pt);
    \fill  (0.587,-0.133) circle (2pt);
    \fill  (1.824,0.203) circle (2pt);
    \fill  (0.505,0.521) circle (2pt);
    \fill  (-1.630,1.816) circle (2pt);
    \fill  (-1.544,1.302) circle (2pt);
    \fill  (1.169,0.954) circle (2pt);
    \fill  (1.444,-0.885) circle (2pt);
    \fill  (-1.521,1.017) circle (2pt);

       \end{tikzpicture}
    \;~
    \begin{tikzpicture}[scale = 1.5]
    \clip (-2,-2) rectangle (2,2);
    \fill (0.000,1.100) circle (1.50pt);
    \draw (3.601,-3.469)--(4.353,2.461);
    \draw (4.021,-2.972)--(3.551,-3.520);
    \draw (3.937,3.082)--(1.963,4.598);
    \draw (-3.966,-3.044)--(-3.048,-3.964);
    \draw (0.782,4.938)--(1.314,-4.824);
    \draw (3.959,-3.053)--(-4.879,1.093);
    \draw (-4.118,-2.835)--(1.742,4.687);
    \draw (-4.793,1.424)--(-3.688,3.377);
    \draw (-4.882,1.078)--(4.024,-2.967);
    \draw (-4.135,2.811)--(-0.326,-4.989);
    \draw (-2.544,4.305)--(-4.416,-2.346);
    \draw (-4.797,1.410)--(3.122,3.906);
    \draw (4.325,2.508)--(3.073,3.944);
    \draw (-4.979,-0.454)--(4.905,-0.972);
    \draw (-2.332,4.423)--(1.736,4.689);
    \draw (2.428,4.371)--(-3.524,-3.547);
    \draw (-3.935,3.084)--(-3.724,-3.336);
    \draw (1.690,4.706)--(3.815,-3.233);
    \draw (4.979,-0.460)--(2.041,4.565);
    \draw (4.675,1.772)--(-3.943,-3.075);
    \draw (-4.765,-1.514)--(-2.291,4.444);
    \draw (2.500,4.770)--(2.926,-4.055);
    \draw (-0.634,-4.960)--(3.001,-3.999);
    \draw (4.681,-1.759)--(-3.955,-3.059);
    \draw (0.003,-5.000)--(1.685,-4.707);
    \fill (0.965,1.575) circle (2pt);
    \fill (0.968,1.530) circle (2pt);
    \fill (1.148,-1.783) circle (2pt);
    \fill (0.968,1.517) circle (2pt);
    \fill (1.074,-0.419) circle (2pt);
    \fill (1.100,-0.903) circle (2pt);
    \fill (0.943,1.983) circle (2pt);
    \fill (1.115,-1.165) circle (2pt);
    \fill (1.124,-1.344) circle (2pt);
    \fill (1.025,0.472) circle (2pt);
    \fill (0.986,1.201) circle (2pt);
    \fill (0.964,1.599) circle (2pt);
    \fill (0.950,1.853) circle (2pt);
    \fill (1.017,0.622) circle (2pt);
    \fill (0.111,-1.248) circle (2pt);
    \fill (-1.550,-0.469) circle (2pt);
    \fill (1.516,-1.907) circle (2pt);
    \fill (1.680,-1.984) circle (2pt);
    \fill (0.320,-1.346) circle (2pt);
    \fill (-1.479,-0.502) circle (2pt);
    \fill (-0.560,-0.933) circle (2pt);
    \fill (1.297,-1.804) circle (2pt);
    \fill (0.958,-1.645) circle (2pt);
    \fill (0.299,-1.336) circle (2pt);
    \fill (0.944,-1.639) circle (2pt);
    \fill (-0.026,-1.183) circle (2pt);
    \fill (0.048,-1.218) circle (2pt);
    \fill (-1.657,-0.418) circle (2pt);
    \fill (1.239,-1.777) circle (2pt);
    \fill (0.291,-1.332) circle (2pt);
    \fill (-1.561,-0.464) circle (2pt);
    \fill (-0.763,-0.838) circle (2pt);
    \fill (-1.288,-0.591) circle (2pt);
    \fill (-1.774,0.174) circle (2pt);
    \fill (-1.517,0.504) circle (2pt);
    \fill (-0.699,1.553) circle (2pt);
    \fill (-1.416,0.634) circle (2pt);
    \fill (-1.316,0.761) circle (2pt);
    \fill (-1.535,0.480) circle (2pt);
    \fill (-1.136,0.993) circle (2pt);
    \fill (-0.499,1.810) circle (2pt);
    \fill (-0.418,1.914) circle (2pt);
    \fill (-0.799,1.425) circle (2pt);
    \fill (-1.789,0.155) circle (2pt);
    \fill (-1.133,0.997) circle (2pt);
    \fill (-1.525,0.493) circle (2pt);
    \fill (-1.649,0.334) circle (2pt);
    \fill (-1.452,0.586) circle (2pt);
    \fill (-1.767,0.182) circle (2pt);
    \fill (-0.642,1.626) circle (2pt);
    \fill (-0.453,1.869) circle (2pt);
    \fill (-1.650,-0.390) circle (2pt);
    \fill (-0.064,-1.110) circle (2pt);
    \fill (0.924,-1.559) circle (2pt);
    \fill (-1.253,-0.570) circle (2pt);
    \fill (-1.918,-0.268) circle (2pt);
    \fill (-1.726,-0.355) circle (2pt);
    \fill (1.023,-1.604) circle (2pt);
    \fill (0.871,-1.535) circle (2pt);
    \fill (0.715,-1.464) circle (2pt);
    \fill (0.167,-1.215) circle (2pt);
    \fill (-0.637,-0.850) circle (2pt);
    \fill (0.062,-1.168) circle (2pt);
    \fill (-0.718,-0.813) circle (2pt);
    \fill (0.886,-1.542) circle (2pt);
    \fill (-0.156,-1.068) circle (2pt);
    \fill (0.276,-1.265) circle (2pt);
    \fill (-1.987,-1.588) circle (2pt);
    \fill (-1.942,-0.614) circle (2pt);
    \fill (-0.191,-0.705) circle (2pt);
    \fill (-1.136,-0.656) circle (2pt);
    \fill (0.317,-0.732) circle (2pt);
    \fill (-1.023,-0.662) circle (2pt);
    \fill (1.532,-0.796) circle (2pt);
    \fill (-0.821,-0.672) circle (2pt);
    \fill (0.755,-0.755) circle (2pt);
    \fill (-0.987,-0.664) circle (2pt);
    \fill (1.098,-0.773) circle (2pt);
    \fill (0.595,-0.746) circle (2pt);
    \fill (0.594,-0.746) circle (2pt);
    \fill (-0.309,-0.699) circle (2pt);
    \fill (-0.944,-0.666) circle (2pt);
    \fill (1.033,-0.769) circle (2pt);
    \fill (0.359,-0.734) circle (2pt);
    \fill (-0.642,-0.682) circle (2pt);
    \fill (-0.813,-0.673) circle (2pt);
    \fill (-1.135,-0.369) circle (2pt);
    \fill (0.398,1.671) circle (2pt);
    \fill (-0.014,1.122) circle (2pt);
    \fill (-1.853,-1.323) circle (2pt);
    \fill (-1.715,-1.140) circle (2pt);
    \fill (-1.901,-1.388) circle (2pt);
    \fill (-0.275,0.775) circle (2pt);
    \fill (-0.247,0.812) circle (2pt);
    \fill (-0.788,0.093) circle (2pt);
    \fill (-0.584,0.365) circle (2pt);
    \fill (-1.806,-1.262) circle (2pt);
    \fill (-1.269,-0.547) circle (2pt);
    \fill (-0.601,0.341) circle (2pt);
    \fill (0.146,1.335) circle (2pt);
    \fill (-0.188,0.892) circle (2pt);
    \fill (-1.349,-0.653) circle (2pt);
    \fill (-0.347,0.680) circle (2pt);
    \fill (0.592,-0.524) circle (2pt);
    \fill (-0.241,-0.993) circle (2pt);
    \fill (-0.592,-1.190) circle (2pt);
    \fill (-0.275,-1.012) circle (2pt);
    \fill (-0.220,-0.981) circle (2pt);
    \fill (1.142,-0.215) circle (2pt);
    \fill (-0.154,-0.944) circle (2pt);
    \fill (-0.085,-0.905) circle (2pt);
    \fill (0.977,-0.308) circle (2pt);
    \fill (1.414,-0.062) circle (2pt);
    \fill (0.568,-0.538) circle (2pt);
    \fill (-0.793,-1.303) circle (2pt);
    \fill (1.833,0.174) circle (2pt);
    \fill (0.011,-0.851) circle (2pt);
    \fill (-1.997,-1.981) circle (2pt);
    \fill (-1.029,-1.436) circle (2pt);
    \fill (-0.045,-0.883) circle (2pt);
    \fill (-0.733,-1.270) circle (2pt);
    \fill (-0.827,-1.322) circle (2pt);
    \fill (0.699,-0.464) circle (2pt);
    \fill (1.321,-0.115) circle (2pt);
    \fill (-0.253,-0.999) circle (2pt);
    \fill (-0.088,-0.907) circle (2pt);
    \fill (-1.060,-1.453) circle (2pt);
    \fill (-1.406,-1.648) circle (2pt);
    \fill (-1.729,-1.830) circle (2pt);
    \fill (1.967,1.882) circle (2pt);
    \fill (1.974,1.839) circle (2pt);

   \end{tikzpicture}
    \caption{
    Realizations of Cox point processes based on absolutely continuous (left) and singular (right) random measures.}
    
    \label{rlaFig}
\end{figure}

To study continuum percolation on $\vXl$, we work with the Gilbert graph $\vG(\vXl)$ on the vertex set $\vXl$ where two points $X_i, X_j \in \vXl$ are connected by an edge if their distance is less than a connection threshold $r>0$. The graph $\vG(\vXl)$ percolates if it contains an infinite connected component.

\subsection{Phase transitions}
First, we establish sufficient criteria for a non-trivial phase transition of continuum percolation in Cox processes. More precisely, we let 
$$\vlc = \vlc(r) = \inf\{\la:\, \P(\vG(\vXl)\text{ percolates})>0\},$$
denote the \emph{critical intensity} for continuum percolation. In contrast to the Poisson case, in the Cox setting the non-triviality of the phase transition, i.e., $0 < \vlc <\infty$, may fail without any further assumptions on $\L$~\cite[Example 4.1]{bartekPerc1}.  For our results we therefore assume that $\L$ exhibits weak spatial correlations in the spirit of stabilization~\cite{PeWa08}. To make this precise, we write $\L_B$ to indicate the restriction of the random measure $\L$ to the set $B \subset \R^d$. Further write 
$$Q_n(x) = x + [-n/2, n/2]^d$$ 
for the cube with side length $n\ge1$ centered at $x \in \R^d$ and put $Q_n = Q_n(o)$. We write $\dist(\varphi, \psi)=\inf\{|x-y|:\, x\in\varphi, y\in\psi\}$ to denote the distance between sets $\varphi,\psi\subset\R^d$.

\begin{definition}\label{Stab}
The random measure $\L$ is \emph{stabilizing}, if there exists a random field of \textit{stabilization radii} $R=\{R_x\}_{x\in\R^d}$ defined on the same probability space as $\L$ such that
\begin{enumerate}
    \item $(\L, R)$ are jointly stationary,
    \item $\lim_{n\uparrow\infty}\P(\sup_{y\in Q_n \cap\,  \Q^d}R_y < n)=1$, and
\item for all $n \ge 1$, the random variables
    $$\Big\{f(\L_{Q_{n}(x)})\one\{\sup_{y \in Q_n(x) \cap \, \Q^d}R_y < n\}\Big\}_{x \in \varphi}$$
        are independent for all  bounded measurable functions $f:\,\mathbb{M} \too [0,\infty)$ and finite $\varphi \subset \R^d$ with $\dist(x, \varphi \setminus \{x\}) > 3n$ for all $x \in \varphi$.
\end{enumerate}
\end{definition}
A strong form of stabilization is given if $\L$ is \emph{$b$-dependent} in the sense that $\L_A$ and $\L_B$ are independent whenever $\dist(A,B)>b$. The above Example~\ref{Ex1} is $b$-dependent for example in cases where $\Xi$ is the classical Poisson-Boolean model and for the shot-noise field. Further, the Poisson-Voronoi tessellation, as in Example~\ref{Ex2}, is stabilizing, as we see in Section~\ref{Ex}. 

In order to avoid confusion, note that the literature contains several different forms of stabilization. Our definition is in the spirit of internal stabilization~\cite{PeWa08}. Loosely speaking, the configuration of the measure in a neighborhood of $x$ does not depend on the configuration of the measure in the neighborhood of points $y$ with $|x-y| > R_x$. The notion of external stabilization would additionally include that conversely the configuration of the measure around $x$ does not affect the measure around $y$.

Stabilization implies the existence of sub-critical phase.
\begin{theorem}
\label{subCritStabThm}
    If $r > 0$ and $\L$ is stabilizing, then $\vlc(r) > 0$.
\end{theorem}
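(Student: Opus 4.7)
The plan is a classical block argument combined with a Peierls-type counting that exploits the long-range independence supplied by stabilization. I would tile $\R^d$ by cubes $B_z := Q_n(nz)$, $z \in \Z^d$, with side length $n > r$ to be chosen later. Since every edge of $\vG(\vXl)$ has Euclidean length at most $r < n$, an infinite component of $\vG(\vXl)$ forces an infinite $\ell^\infty$-connected set of \emph{occupied} sites, where $z$ is occupied when $B_z \cap \vXl \neq \emptyset$; it is then enough to prove that this site process does not percolate for $\la$ small.

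For the marginal, conditioning on $\L$ the count $|\vXl \cap B_z|$ is Poisson with parameter $\la \L(B_z)$, so
\[
\P(z \text{ occupied}) \;=\; \E\bigl[1 - e^{-\la \L(B_z)}\bigr] \;\le\; \la\, \E[\L(B_z)] \;=\; \la n^d,
\]
which can be made as small as desired by shrinking $\la$. For the long-range independence I would apply property~(3) of Definition~\ref{Stab} to $f(\mu) := 1 - e^{-\la \mu(\R^d)}$: for any finite set of sites $z_1, \ldots, z_m$ whose centres $nz_i$ are pairwise at distance greater than $3n$, the variables $(1 - e^{-\la \L(B_{z_i})})\,\one\{\sup_{y \in Q_n(nz_i) \cap \Q^d} R_y < n\}$ are jointly independent. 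Combined with the conditional independence, given $\L$, of the Cox points in disjoint boxes, this yields
\[
\P\bigl(\text{each } z_i \text{ is occupied and stabilization holds at } nz_i\bigr) \;\le\; (\la n^d)^m.
\]

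A Peierls bound along $\ell^\infty$-self-avoiding paths of occupied sites from the origin then concludes the argument: from any such path of length $N$ a greedy selection extracts a sub-path of length at least $N/c_d$ whose sites lie at pairwise $\ell^\infty$-distance greater than $3$, since a self-avoiding path can visit any $\ell^\infty$-ball of radius $3$ at most $7^d$ times. Restricted to the event that stabilization holds at each of those sites, the all-occupancy probability is at most $(\la n^d)^{N/c_d}$, while there are at most $C_d^N$ self-avoiding paths of length $N$ from the origin. Choosing $n$ large enough for the stabilization-failure probability $\P(\sup_{y \in Q_n \cap \Q^d} R_y \ge n)$ to be negligible by property~(2) of Definition~\ref{Stab}, and then $\la$ small enough that $C_d (\la n^d)^{1/c_d} < 1$, delivers summability and hence the a.s.\ absence of percolation.

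The main difficulty is to absorb the stabilization-failure events into this Peierls bound, since a naive union bound over all $N$ path sites produces a contribution of order $N\,\epsilon_0(n)$ that is fatal once multiplied by $C_d^N$. The clean remedy is a Liggett--Schonmann--Stacey stochastic domination of the combined ``bad'' field (site $z$ is bad if it is occupied or if stabilization fails at $nz$) by a low-density Bernoulli site percolation: after a modest enlargement of the scale in Definition~\ref{Stab} the bad field becomes finite-range dependent, and each of its two contributions to the marginal can be pushed below the Bernoulli critical threshold by first taking $n$ large and then $\la$ small. This is the step I expect to be the most delicate part of the argument.
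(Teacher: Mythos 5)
Your final proposal is essentially the paper's proof: define a block ``bad'' field where $z$ is bad when $Q_n(nz)$ contains a Cox point or stabilization fails there, observe that this field is finite-range dependent by Definition~\ref{Stab}(3), make the marginal bad-probability small by first taking $n$ large (to control stabilization failure) and then $\la$ small (to empty the boxes), and invoke the Liggett--Schonmann--Stacey domination to conclude that bad sites do not percolate. You correctly diagnose that the intermediate Peierls calculation you sketch does not survive the union bound over stabilization-failure events, and the ``clean remedy'' you then propose is exactly the argument used in the paper.
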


For the existence of a super-critical regime, the condition that $\L$ is stabilizing is not sufficient. For example, the measure $\L\equiv0$ is stabilizing, but $\vlc(r)=\infty$ for every $r > 0$. Consequently, we rely on the idea of asymptotic essential connectedness (see~\cite{aldous}) to introduce a sufficient condition for the existence of a super-critical phase. To state this succinctly, we write 
$$\supp(\mu) = \{x \in \R^d:\, \mu(Q_\e(x)) > 0 \text{ for every $\e > 0$} \}$$
for the support of a measure $\mu$.

\begin{definition}
    \label{aecDef}
		A stabilizing random measure $\L$ with stabilization radii $R$ is \emph{asymptotically essentially connected} if for all $n \ge 1$, whenever $\sup_{y\in Q_{2n}}R_y<n/2$ we have that 
        \begin{enumerate}
            \item $\supp(\L_{Q_n}) \ne \es$ and 
            \item $\supp(\L_{Q_n})$ is contained in a connected component of $\supp(\L_{Q_{2n}})$.
        \end{enumerate}
	\end{definition}
Example~\ref{Ex1} for $\Xi$ the Poisson-Boolean model and with $\la_1,\la_2>0$ as well as Example~\ref{Ex2} for the Poisson-Voronoi and Poisson-Delaunay tessellation are asymptotically essentially connected. For the Poisson-Boolean model this is clear and for the Poisson-Voronoi tessellation case we provide a detailed proof in Section~\ref{Ex}. The shot-noise field is not asymptotically essentially connected in general. Under the assumption of asymptotic essential connectedness, there is a non-trivial super-critical phase.

\begin{theorem}
\label{supCritThm}
    If $r > 0$ and $\L$ is asymptotically essentially connected, then $\vlc(r) < \infty$.
\end{theorem}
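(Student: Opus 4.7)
The plan is a standard renormalization argument: partition $\R^d$ into cubes $Q_n(nz)$, $z\in\Z^d$, at a scale $n$ to be chosen large, and declare a site $z\in\Z^d$ \emph{good} so that an infinite nearest-neighbor cluster of good sites lifts to an infinite connected component of $\vG(\vXl)$, while the family of good sites stochastically dominates a supercritical Bernoulli site percolation on $\Z^d$. For a suitable dimension-dependent constant $M$ (chosen so that $Q_{4n}(n(z+z')/2)$ together with its $r/4$-neighborhood is contained in $Q_{Mn}(nz)$ for every lattice neighbor $z'$ of $z$), call $z$ good if \textbf{(G1)} $\sup_{y\in Q_{Mn}(nz)\cap\Q^d}R_y<n$; \textbf{(G2)} for every lattice neighbor $z'$, the supports $\supp(\L_{Q_n(nz)})$ and $\supp(\L_{Q_n(nz')})$ lie in a common connected component $C_{z,z'}$ of $\supp(\L_{Q_{4n}(n(z+z')/2)})$; and \textbf{(G3)} every point of each $C_{z,z'}$ is within Euclidean distance $r/4$ of some point of $\vXl\cap Q_{Mn}(nz)$. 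On (G2)$\cap$(G3) the Cox points whose $r/4$-balls meet $C_{z,z'}$ form a subgraph of $\vG(\vXl)$ that is connected (any two Cox points in $r/4$-balls centered on $C_{z,z'}$ at pairwise distance $\le r/4$ are at Euclidean distance $<r$), and this subgraph reaches both $\supp(\L_{Q_n(nz)})$ and $\supp(\L_{Q_n(nz')})$, producing a Gilbert path between neighboring cubes. An infinite cluster of good sites therefore certifies percolation of $\vG(\vXl)$.

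By stabilization property (2), $\P(\text{(G1)})\to1$ as $n\to\infty$. The elementary inclusions $Q_n(nz)\cup Q_n(nz')\subset Q_{2n}(n(z+z')/2)$ and $Q_{4n}(n(z+z')/2)\subset Q_{Mn}(nz)$ together with (G1) verify the hypothesis of Definition~\ref{aecDef} at scale $2n$ centered at $n(z+z')/2$, which yields (G2) directly. Conditional on $\L$ satisfying (G1) and (G2), each compact set $C_{z,z'}$ admits a finite cover by balls $B(y_i,r/8)$ of strictly positive $\L$-measure (by the definition of the support); conditional on $\L$ the Cox process hits every such ball with probability $1-\exp(-\la\L(B(y_i,r/8)))$, which tends to $1$ as $\la\to\infty$. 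A finite union bound then gives $\P(\text{(G3)}\mid\L)\to1$ on the event (G1)$\cap$(G2), and dominated convergence in $\L$ yields $\P(z\text{ good})\to\P((\text{G1})\cap(\text{G2}))$, which can be made $\ge1-\eps$ by first choosing $n$ large and then $\la$ large.

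The good-site event is measurable with respect to $(\L,\vXl)$ restricted to $Q_{Mn}(nz)$ and is of the form $f(\L_{Q_{Mn}(nz)})\cdot\one_{\{\sup R\,<\,Mn\text{ in }Q_{Mn}(nz)\}}$ after integrating out the independent Poisson layer conditional on $\L$. Stabilization property (3) applied at scale $Mn$ (noting that (G1) implies the required stabilization indicator), combined with the conditional independence of $\vXl$ on disjoint regions, then renders $\{\one\{z\text{ good}\}\}_{z\in\Z^d}$ independent across lattice sites separated by more than $3M$ in the $\infty$-norm. For $\eps$ sufficiently small the Liggett--Schonmann--Stacey comparison theorem shows that such a $k$-dependent high-density site process stochastically dominates a supercritical Bernoulli site percolation on $\Z^d$; hence an infinite cluster of good sites exists with positive probability, which by the paragraph above forces $\vlc(r)<\infty$.

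The principal technical obstacle is condition (G3). For a generic realization of $\L$ the $\L$-mass of a small ball intersecting $\supp(\L)$ can be arbitrarily small, so there is no uniform-in-$\L$ threshold for the density $\la$ needed to ensure that all cover balls are hit. The two-stage limit---first $\la\to\infty$ pointwise in $\L$, exploiting that for each fixed realization the cover is finite and each individual ball has strictly positive measure, then dominated convergence in $\L$---resolves this, using only that the conditional coverage probabilities are bounded by $1$. A minor secondary care is the verification that the measurability and stabilization bookkeeping correctly delivers independence of the good events; this is handled by encoding the stabilization indicator directly into the definition of good, so that stabilization property (3) applies verbatim.
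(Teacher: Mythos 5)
Your proposal follows the same renormalization strategy as the paper: partition space into cubes, declare a site good when the stabilization radius is controlled and the Cox points densely cover the connected support, then invoke stabilization plus the $k$-dependent domination theorem of~\cite{domProd} to obtain a supercritical Bernoulli subprocess. The decomposition of the good event into (G1)--(G3) is phrased a little differently from the paper's conditions (bounded stabilization radius, existence of a Cox point in $Q_n(nz)$, and pairwise connectivity of all Cox points in $Q_{3n}(nz)$ inside $Q_{6n}(nz)$), and your two-stage ``$\la\to\infty$ pointwise in $\L$, then dominated convergence'' argument for (G3) is exactly the device the paper uses.

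There is, however, a gap in the chaining step that traces back to the constant in (G1). You state (G1) as $\sup_{y\in Q_{Mn}(nz)\cap\Q^d}R_y<n$ and apply Definition~\ref{aecDef} at scale $2n$ centered at $n(z+z')/2$; this gives that $\supp(\L_{Q_{2n}(n(z+z')/2)})$ is non-empty and lies in a single connected component $C_{z,z'}$, but it does \emph{not} give $\supp(\L_{Q_n(nz)})\ne\es$. Yet gluing the Gilbert subgraph attached to $C_{z_i,z_{i+1}}$ to the one attached to $C_{z_{i+1},z_{i+2}}$ requires a point $y\in\supp(\L_{Q_n(nz_{i+1})})$, since that is the only set you know to lie in both $C_{z_i,z_{i+1}}$ and $C_{z_{i+1},z_{i+2}}$; if $\supp(\L)$ misses the small cube $Q_n(nz_{i+1})$ (which is compatible with (G1)--(G3) as you wrote them, because (G2) is vacuously true for an empty support), the two components need not intersect and an infinite cluster of good sites need not lift to an infinite Gilbert component. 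Extracting $\supp(\L_{Q_n(nz)})\ne\es$ from Definition~\ref{aecDef} at scale $n$ centered at $nz$ requires $\sup_{y\in Q_{2n}(nz)}R_y<n/2$, which your $<n$ bound does not supply. The paper takes precisely this precaution: it uses $R(Q_{6n}(nz))<n/2$ and adds the explicit condition $\vXl\cap Q_n(nz)\ne\es$, whose probability it controls via $\L(Q_n)>0$ exactly through Definition~\ref{aecDef}(1). Replacing the bound $<n$ in your (G1) by $<n/2$ (with $M$ large enough that $Q_{2n}(nz)$ and $Q_{4n}(n(z+z')/2)$ sit inside $Q_{Mn}(nz)$), then invoking Definition~\ref{aecDef}(1) at scale $n$ to certify $\supp(\L_{Q_n(nz)})\ne\es$, closes the gap; the rest of your argument, including the dependence bookkeeping and the (G3) limit, is sound.
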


\subsection{Asymptotic results on the percolation probability}
In classical continuum percolation based on a homogeneous Poisson point process, the critical intensity is characterized via the percolation probability in the sense that 
$$\vlc=\inf\{\la:\, \P(o\leftrightsquigarrow \infty)>0\},$$
where $\{ o \leftrightsquigarrow \infty\}$ denotes the event that $o$ is contained in an infinite connected component of the Gilbert graph $\vG(\vXl)\cup\{o\}$. The reason for this identity is the equivalence of the Poisson point process and its reduced Palm version~\cite{poisBook}. This is no longer true for general Cox processes and therefore, the proper definition of the percolation probability relies on Palm calculus.  The \emph{Palm version} $\vXls$ of $\vXl$ is a point process whose distribution is defined via 
\begin{align*}
\E[f(\vXls)] = \frac{1}{\la}\E\Big[\sum_{X_i\in Q_1}f(\vXl-X_i)\Big]
\end{align*}
where $f: \mathbb{M}_{\ms{co}}\too [0, \infty)$ is any bounded measurable function acting on the set of $\sigma$-finite counting measures $\mathbb{M}_{\ms{co}}$. In particular,  $\P(o \in \vXls)=1$. Now, 
$$\theta(\la,r) = \P(o \leftrightsquigarrow \infty\text{ in }\vG(\vXls))$$
denotes the \emph{percolation probability} (of the origin). With this definition we recover the identity $\vlc = \inf_{\la>0}\theta(\la,r)$. Indeed, if $\theta(\la, r) > 0$, then 
$\E[\#\{X_i\in Q_1:\, X_i\leftrightsquigarrow \infty\}] > 0$
and hence 
$\P(\vG(\vXl)\text{ percolates}) > 0$. Conversely, if $\theta(\la,r)=0$, then 
$\E[\#\{X_i\in Q_1:\, X_i\leftrightsquigarrow \infty\}] = 0$
and hence 
$\P(\vG(\vXl)\text{ percolates}) = 0$ by stationarity.

Note that the Palm version $\vXls$ is the union of the origin and another Cox process defined via the Palm version $\L^*$ of the original random measure, see~\cite{fleischer}. Finally, the translation operator $\vartheta$ is defined by $\vartheta_x(\L(A))=\L(A+x)$ for all measurable $A\subset\R^d$. The distribution of $\L^*$ is given by
\begin{align*}
    \E[f(\L^*)] = \E\Big[\int_{Q_1}\L(\d x)f(\vartheta_x(\L))\Big]
\end{align*}
where $f: \mathbb{M}\too [0, \infty)$ is any bounded measurable function. 

\subsubsection{Large-radius and high-density limits}
In the Poisson-Boolean model, the percolation probabilities approach 1 exponentially fast as the radius grows large~\cite{pcPerc}.  More precisely, 
\begin{align}\label{PPPAs}
\lim_{r\uparrow\infty}r^{-d}\log(1 - \theta(\la,r)) = -|B_1(o)|\la,
\end{align}
where $|B_s(x)|$ denotes the volume of the $d$-dimensional ball with radius $s > 0$ centered at $x \in \R^d$. For $b$-dependent Cox processes, the exponentially fast convergence remains valid with a $\L$-dependent rate.

\begin{theorem}\label{rlowLDPThm}
    If $\la > 0$, then
    $$\liminf_{r \uparrow \infty} r^{-d} \log(1 - \theta(\la,r)) \ge \liminf_{r \to \infty} r^{-d} \log\E[\exp(-\la \L^*(B_r(o)))].$$
    If, additionally, $\L$ is $b$-dependent and $\L(Q_1)$ has all exponential moments, then
    the limit 
    $$I^* =- \lim_{r \to \infty} r^{-d} \log\E[\exp(-\la \L(Q_r))]$$
    exists and 
    $$\lim_{r \uparrow \infty} r^{-d} \log(1 - \theta(\la,r)) = -|B_1(o)|I^*.$$
\end{theorem}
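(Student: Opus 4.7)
The plan is to split the proof into an elementary lower bound, a superadditivity argument for the existence of $I^*$, and a two-stage upper bound exploiting $b$-dependence.

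\emph{Lower bound.} The first inequality is a direct consequence of the isolation bound. If $B_r(o)$ contains no point of $\vXls \setminus \{o\}$, then $o$ is isolated in $\vG(\vXls)$ and so $o \not\leftrightsquigarrow \infty$. Under the Palm distribution the conditional law of $\vXls \setminus \{o\}$ given $\L^*$ is Poisson with intensity measure $\la \L^*$, so the conditional probability of this isolation event equals $\exp(-\la \L^*(B_r(o)))$. Integrating out gives
\begin{equation*}
1 - \theta(\la, r) \ge \E\bigl[\exp(-\la \L^*(B_r(o)))\bigr],
\end{equation*}
from which the first inequality of the theorem follows by taking logarithms and dividing by $r^d$.

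\emph{Existence of $I^*$.} Setting $\phi(r) := -\log \E[\exp(-\la \L(Q_r))]$, I would observe that $Q_{k(r+b)}$ contains $k^d$ translates of $Q_r$ pairwise at distance at least $b$. Combining the pointwise monotonicity $\exp(-\la \L(A')) \ge \exp(-\la \L(A))$ for $A' \subset A$ (which uses $\L \ge 0$) with the factorization afforded by $b$-dependence yields
\begin{equation*}
\E[\exp(-\la \L(Q_{k(r+b)}))] \le \prod_{i=1}^{k^d}\E[\exp(-\la \L(Q_r))],
\end{equation*}
i.e., $\phi(k(r+b)) \ge k^d \phi(r)$. Together with the monotonicity of $\phi$ in $r$ and the Jensen bound $\phi(r) \le \la r^d$, interpolating over $k$ shows that $\liminf_{R \to \infty} \phi(R)/R^d \ge \phi(r)/(r+b)^d$ for every $r$, and then letting $r \to \infty$ produces the limit $I^* \in [0, \la]$.

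\emph{Matching upper bound.} The upper bound is carried out in two stages. In the first stage I would upgrade the limit from $(Q_r, \L)$ to $(B_r(o), \L^*)$: a buffered tiling of $B_r(o)$ by unit cubes and $b$-dependence, supported by the exponential-moment assumption on $\L(Q_1)$, give
\begin{equation*}
\lim_{r \to \infty} r^{-d} \log \E\bigl[\exp(-\la \L(B_r(o)))\bigr] = -|B_1(o)|\, I^*,
\end{equation*}
and the decomposition $\L^*(B_r(o)) = \L^*(B_b(o)) + \L^*(B_r(o)\setminus B_b(o))$, whose summands are independent by $b$-dependence and whose second summand is distributed as $\L(B_r(o)\setminus B_b(o))$ by stationarity, upgrades the identity to the Palm version. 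In the second stage I would run a renormalization at a mesoscopic scale $s = s(r)$ with $1 \ll s \ll r$: conditionally on $\L^*$, declare a block of side $s$ good if $\L^*$ on its $r$-thickening has mass bounded below by a small multiple of the volume; on good blocks the conditional Poisson process $\vXls$ with intensity $\la \L^*$ is strongly supercritical, and a Peierls-type argument on the good-block configuration forces the conditional non-percolation event to differ from the conditional isolation event by a correction that is exponentially smaller than $\exp(-\la \L^*(B_r(o)))$. Averaging against the law of $\L^*$ and invoking the first stage closes the argument.

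\emph{Main obstacle.} The delicate step is the renormalization in the second stage: one has to ensure that the corrections coming from both atypical $\L^*$-configurations and from conditional Poisson fluctuations remain exponentially subdominant to $\exp(-|B_1(o)| I^* r^d)$. This forces a careful tuning of the block scale $s(r)$ (too small a scale kills supercriticality of the conditional block process; too large a scale lets rare $\L^*$-events dominate the expectation), and it relies crucially on the exponential moments of $\L(Q_1)$ to obtain sufficiently strong concentration for $\L$ on mesoscopic cubes, together with the $b$-dependence to decorrelate distant blocks.
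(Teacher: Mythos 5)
Your lower bound and the existence argument for $I^*$ are both sound, and the existence argument is in fact a touch cleaner than the paper's: you exploit the inclusion $Q_{k(r+b)} \supset \bigcup_i Q_r^{(i)}$ for $k^d$ boxes separated by gaps of width $b$, use $\L\ge 0$ to drop the gap contributions, and let $b$-dependence factorize the Laplace transform, giving $\phi(k(r+b))\ge k^d\phi(r)$; combined with monotonicity and the Jensen bound $\phi(r)\le\la r^d$ this yields the limit without ever needing the exponential moments. The paper reaches the same conclusion following Sepp\"al\"ainen--Yukich near-additivity, eroding sub-cubes by $Q_b$ and controlling the boundary layer with a Markov inequality (their Lemma on $\L(A)\ge L$), which \emph{does} use exponential moments. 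So for this part your route is genuinely different and more economical.

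The matching upper bound is where the proposal has a real gap. Two issues. First, the upgrade from $\L$ to $\L^*$ via the decomposition $\L^*(B_r(o))=\L^*(B_b(o))+\L^*(B_r(o)\setminus B_b(o))$ is not correct as stated: the two regions touch, so $b$-dependence gives no independence between them, and the Palm size-biasing does not simply disappear on $B_r(o)\setminus B_b(o)$ (you would need a gap of width $>b$ that also clears $Q_1$, after which only the outer piece becomes exchangeable with $\L$). The paper instead sandwiches $\E[\exp(-\la\L^*(rA))]$ between $\E[\L(Q_1)\exp(-\la\L(rA\oplus Q_1))]$ and $\E[\L(Q_1)\exp(-\la\L(rA\ominus Q_1))]$ using the Palm formula, then H\"older plus a Markov truncation; this is what is actually needed. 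Second, and more seriously, the mesoscopic renormalization ``conditionally on $\L^*$'' at scale $1\ll s\ll r$ does not deliver the constant $\kappa_d=|B_1(o)|$. The claim that, conditionally on $\L^*$, non-percolation differs from isolation by a correction ``exponentially smaller than $\exp(-\la\L^*(B_r(o)))$'' cannot hold pointwise in $\L^*$ (take $\L^*$ vanishing on a large annulus), and the dominant contribution to $1-\theta$ comes precisely from atypical $\L^*$ for which the conditional bound is vacuous. The mechanism by which the paper produces $\kappa_d$ is a Peierls sum over discretized cluster shapes $A$ at scale $r\e$ (Lemmas~\ref{peierlsLem1} and~\ref{peierlsLem2}), combined with the Steiner formula to show the forbidden region $\partial_r A$ always has volume at least $\kappa_d(1-2d\e)^d r^d$, and then Lemma~\ref{concLenLem}(2) to convert this to a rate. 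Your sketch contains no analogue of the convex-hull/Steiner step, and it is exactly that geometric input which pins down $|B_1(o)|$ rather than some other shape-dependent constant; without it, the proof does not close.
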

In terms of large deviations, Theorem~\ref{rlowLDPThm} states that the most efficient way to avoid percolation is to force the origin to be isolated, see the left side of Figure~\ref{rlowFig}. 
\begin{figure}[!htpb]
    \centering
    \includegraphics{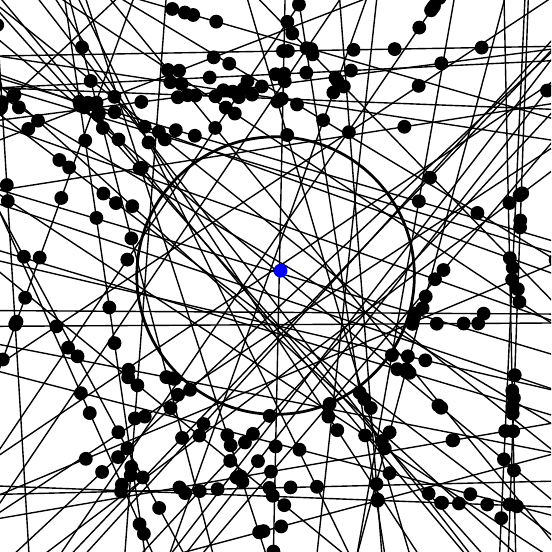}
    \;~
    \includegraphics{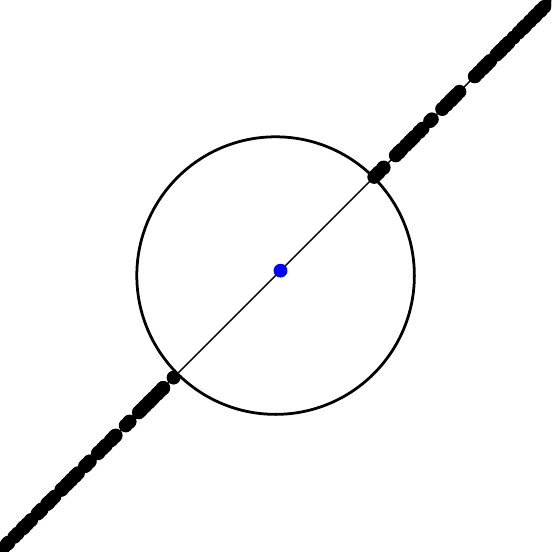}
    \caption{Asymptotic behavior in large-radius (left) and high-density limit (right) for the example of Poisson line tessellation.}
    \label{rlowFig}
\end{figure}
The above result is applicable to Example~\ref{Ex1} in case of the Poisson-Boolean model and the shot-noise field. To the best of our knowledge, for the classical tessellation processes, as in Example~\ref{Ex2}, the large-deviation behavior for the total edge length has not been derived in literature yet. Therefore, in this situation, the above result only gives a lower bound. Although computing the limiting Laplace transform is difficult in general, we show in Section~\ref{Ex} that for the shot-noise field, the original expression simplifies substantially.
\medskip

In classical continuum percolation, the scaling invariance of the Poisson-Boolean model makes it possible to translate limiting statements for $r \uparrow \infty$ into statements for $\la\uparrow\infty$. This is no longer the case for Cox processes. In the high-density regime, the connectivity structure of the support of $\L^*$ becomes apparent. Loosely speaking, here the rate function is given by the most efficient way to avoid percolation. For a given realization of $\L^*$, percolation can be avoided by finite clusters at the origin such that there are no points at distance $r$ from the cluster. In a second step, we optimize over $\L^*$, see the right side of Figure~\ref{rlowFig}.

More precisely, let the family $\mc{R}_r$ consist of all compact sets that contain the origin and are \emph{$r$-connected}. That is, of all compact $A \subset \R^d$ such that $o \in A$ and such that the points at distance at most $r/2$ from $A$ form a connected subset of $\R^d$. Moreover, let
$$\partial_r A = \{x \in \R^d:\, \dist(x, A) < r\} \setminus A$$
denote the \emph{$r$-boundary} of $A$. The next result characterizes the asymptotic behavior of the percolation probability for stabilizing random measures that are supported on $\R^d$.
\begin{theorem}\label{lalowLDPThm}
    Let $r>0$. Then,
    $$\liminf_{\la\uparrow\infty}\la^{-1}\log(1 - \theta(\la, r))\ge- \inf_{A \in \mc{R}_r}\essinf \L^*(\partial_r A).$$
    If, additionally, $\L$ is $b$-dependent and $\essinf \L(Q_\delta) > 0$ for every $\delta > 0$, then
    $$\limsup_{\la\uparrow\infty}\la^{-1}\log(1 - \theta(\la, r))\le -\lim_{\e\tod0}\inf_{A \in \mc{R}_{r + \e}}\essinf \L^*(\partial_{r - \e} A).$$
\end{theorem}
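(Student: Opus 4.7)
The plan is to treat the two directions separately, both linked to the single observation that on $\{o\not\leftrightsquigarrow\infty\}$ the cluster $C$ of $o$ in $\vG(\vXls)$ is finite, $r$-connected, and satisfies $\vXls\cap\partial_r C=\es$, so that conditional on $\L^*$ the event reduces to a Poisson-void computation.

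For the lower bound, fix $A\in\mc R_r$ and consider the event $\{\vXls\cap\partial_r A=\es\}$. On this event every $r$-path in $\vG(\vXls)$ starting at $o$ stays in $A$: if a consecutive pair $y_{i-1}\in A$, $y_i\notin A$ existed with $|y_i-y_{i-1}|<r$, then $y_i\in\partial_r A$, contradicting emptiness. Since $A$ is compact, the cluster is finite and $o\not\leftrightsquigarrow\infty$. Conditioning on $\L^*$ and using that $\vXls\setminus\{o\}$ is Poisson of intensity $\la\L^*$,
\[1-\theta(\la,r)\ge\E\bigl[\exp(-\la\L^*(\partial_r A))\bigr].\]
For arbitrary $\delta>0$ the event $\{\L^*(\partial_r A)\le\essinf \L^*(\partial_r A)+\delta\}$ has positive probability, and restricting the expectation to it, taking $\la^{-1}\log$, and sending $\la\uparrow\infty$, $\delta\downarrow0$, and finally the supremum over $A\in\mc R_r$ delivers the lower bound.

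For the upper bound, fix $\eps>0$, set $\eta=\eps/(2\sqrt d)$, and partition $\R^d$ into $\eta$-cubes. On $V_\la=\{o\not\leftrightsquigarrow\infty\}$ let $C$ be the (finite) cluster of $o$ and $\widehat C$ the union of $\eta$-cubes meeting $C$. A short geometric check gives $\widehat C\in\mc R_r\subset\mc R_{r+\eps}$ (two cubes containing neighbouring vertices of $C$ are at distance at most their vertex distance, so their $r/2$-thickenings overlap) and $\vXls\cap\partial_{r-\eps}\widehat C=\es$ (any $y\in\partial_{r-\eps}\widehat C$ satisfies $y\notin C$ and $\dist(y,C)\le r-\eps+\sqrt d\,\eta<r$, hence $y\in\partial_r C$, forcing $y\notin\vXls$). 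Summing over the countable family $\mc A_\eta$ of admissible coarse-grained shapes,
\[1-\theta(\la,r)\le\sum_{A\in\mc A_\eta}\P(\vXls\cap\partial_{r-\eps}A=\es)\le\sum_{A\in\mc A_\eta}\exp\bigl(-\la\,\essinf \L^*(\partial_{r-\eps}A)\bigr),\]
where the second inequality uses the deterministic a.s.\ bound $\L^*(\partial_{r-\eps}A)\ge\essinf \L^*(\partial_{r-\eps}A)$.

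The main obstacle is extracting the leading term $\exp(-\la m^*_\eps)$ with $m^*_\eps:=\inf_{A\in\mc R_{r+\eps}}\essinf \L^*(\partial_{r-\eps}A)$ from this union bound, since naively each term is bounded by $\exp(-\la m^*_\eps)$ while $\mc A_\eta$ is infinite. Here I would invoke both standing hypotheses: the assumption $\essinf \L(Q_\delta)>0$ transfers under the Palm shift to a uniform deterministic lower bound $\essinf \L^*(Q_\eta(x))\ge c>0$ for every $x$, so that $\L^*(\partial_{r-\eps}A)$ is at least $c$ times the packing number of $\eta$-cubes inside $\partial_{r-\eps}A$; and $b$-dependence of $\L$ decouples distant regions of the boundary, which turns a classical Peierls-type count of lattice-animal-like shapes with fixed boundary complexity (at most exponential in that complexity) into a geometric series after multiplication by the corresponding $\exp(-\la c p)$-factor. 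For $\la$ large this series converges and is dominated by its leading term of order $\exp(-\la m^*_\eps)$ up to a sub-exponential factor; taking $\la^{-1}\log$, $\la\uparrow\infty$ and then $\eps\downarrow 0$ gives the claim. The subtle point is that for roundish $A$ the boundary measure grows only sub-linearly in the number of cubes of $A$, so one must count animals by boundary complexity rather than by size to reconcile the isoperimetric scaling with the exponential combinatorial count; that is the technical heart of the argument.
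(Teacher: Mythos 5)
Your lower bound is correct and coincides with the paper's: the only point to be careful about, which you handle correctly, is that on $\{\vXls\cap\partial_r A=\es\}$ the cluster of $o$ is trapped in $A$, after which the elementary $\P(\L^*(\partial_r A)\le\essinf+\delta)\exp(-\la(\essinf+\delta))$ estimate closes the argument.

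For the upper bound you have correctly located the crux, but left it unproved. Your single union bound over all coarse-grained shapes $A$ cannot be closed with the raw bound $\exp(-\la\,\essinf\L^*(\partial_{r-\eps}A))$ unless one counts shapes by the size of their \emph{external boundary}: if $\mc A_\eta^{(k)}$ denotes shapes built from $k$ cubes, then $\#\mc A_\eta^{(k)}$ grows like $a^k$ while the isoperimetric/packing bound only yields $\essinf\L^*(\partial_{r-\eps}A)\gtrsim k^{(d-1)/d}$, and $\sum_k a^k\exp(-\la c\,k^{(d-1)/d})$ diverges for every $\la$. You flag this yourself (``one must count animals by boundary complexity rather than by size\dots that is the technical heart of the argument'') but then assert the count rather than establish it. The missing ingredient is a connectivity statement for the \emph{discretized external boundary} of the cluster, so that the relevant family is parametrized by lattice animals of size $p$ (the boundary complexity) with $\essinf\L^*(\partial_{r-\eps}A)\gtrsim c\,p$; only then does $\sum_p a^p\exp(-\la c p)$ behave as required. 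The paper does exactly this via Lemma~\ref{peierlsLem2} (the discretized external boundary is $5dr\e$-connected), but organized differently: it first controls $1-\theta_{Kr}$ in the bounded box $Q_{Kr}$, where only finitely many discretized $A$'s occur so the union bound is trivial, and then shows $\theta_{Kr}-\theta$ is an exponentially good approximation via the external-boundary Peierls count in~\eqref{kInfEq}. A further minor confusion in your sketch is the role of $b$-dependence: once you pass to $\exp(-\la\,\essinf\L^*(\cdot))$ there is nothing left to ``decouple,'' and the hypothesis $\essinf\L(Q_\delta)>0$ alone drives the geometric series; the paper invokes $b$-dependence because it bounds $\P(\vXl\cap A^{\oplus(5d)^{-1}r}=\es)$ stochastically (by factoring over non-adjacent sub-blocks) rather than deterministically through the $\essinf$, so in reusing the Theorem~\ref{rlowLDPThm} machinery it genuinely needs the independence that you do not.
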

In general it is not true that the lower bound given by the isolation probability describes the true rate of decay of $1 - \theta(\la,r)$. Indeed, if the support of $\L$ does not percolate, then $\sup_{\la > 0}\theta(\la,r) = 0$. Nevertheless, for Example~\ref{Ex1} in case of the Poisson-Boolean model with $\la_1,\la_2>0$ the above right-hand sides are optimal for $A = \{o\}$. For the singular examples and for the shot-noise example the condition $\essinf \L(Q_\delta) > 0$ for every $\delta > 0$ is not satisfied. The right-hand side of the lower bound can be computed for the Poisson-Voronoi tessellation case and equals $-2r$.

\medskip
\subsubsection{Coupled limits}
Theorems~\ref{rlowLDPThm} and~\ref{lalowLDPThm} describe the limiting behavior w.r.t.~$r$ and $\la$ separately. Now, we present three results about coupled limits. First if $\la$ and $r$ are such that $r^d\la=\r$ is constant, then by~\cite[Theorem 11.3.III]{daleyPPII2009}, the rescaled Cox process $r^{-1}\vXl$ converges weakly to a homogeneous Poisson point process with intensity $\r$, see Figure~\ref{larFig}. This gives rise to the following statement about the percolation probabilities, where $\bar\theta(\r)$ denotes the continuum percolation probability associated with a homogeneous Poisson point process with intensity $\r$ and connection radius 1.

\begin{figure}[!htpb]
    \includegraphics{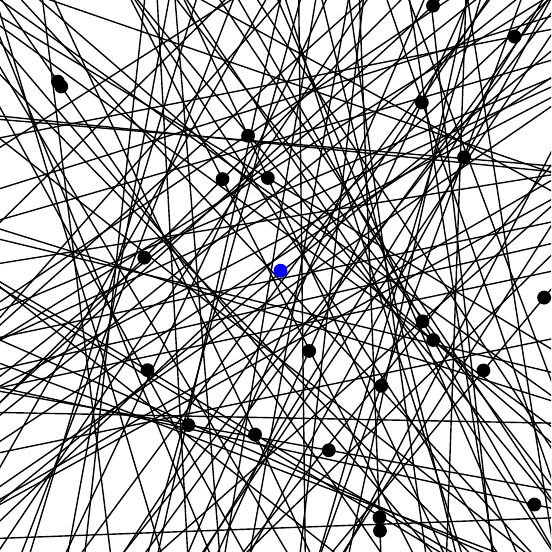}
    \caption{Coupled limit $r \to \infty$ and $\la \tod 0$.}
    \label{larFig}
\end{figure}

\begin{theorem}\label{larThm}
Let $\r > 0$. Then, 
    $$\limsup_{\substack{r\uparrow\infty,\, \la\tod 0 \\ \la r^d = \r}}\theta(\la,r) \le \bar\theta(\r).$$
If $\L$ is stabilizing, then
    $$\lim_{\substack{r\uparrow\infty,\, \la\tod 0 \\ \la r^d = \r}}\theta(\la,r) = \bar\theta(\r).$$
\end{theorem}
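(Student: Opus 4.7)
The plan is to use the scaling $x \mapsto x/r$ to reduce to a unit connection radius and then exploit weak convergence of the rescaled Cox process to a homogeneous Poisson point process of intensity $\r$. Since $x \mapsto x/r$ is a graph isomorphism between the radius-$r$ Gilbert graph on $\vXls$ and the radius-$1$ Gilbert graph on $r^{-1}\vXls$, we can rewrite $\theta(\la, r) = \P(o \leftrightsquigarrow \infty \text{ in the latter})$. The coupling $\la r^d = \r$ is tuned so that the random intensity measure of $r^{-1}\vXl$ converges vaguely in distribution to $\r$ times Lebesgue measure, and the cited result from~\cite{daleyPPII2009} then gives $r^{-1}\vXl \Rightarrow \bar X$ with $\bar X$ a homogeneous Poisson point process of intensity $\r$; Slivnyak's theorem upgrades this to $r^{-1}\vXls \Rightarrow \{o\}\cup \bar X$.

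For the upper bound, I would introduce the truncated events
$$F_n = \big\{\exists \text{ distinct } o = Y_0, Y_1, \ldots, Y_n \text{ in the configuration with } |Y_i - Y_{i-1}| \le 1 \text{ and } Y_i \in B_n(o)\big\}.$$
Since an $n$-step chain has every vertex within Euclidean distance $n$ of $o$, the infinite-cluster event at $o$ is contained in $F_n$; the $F_n$ are decreasing in $n$, and their intersection equals the infinite-cluster event via a K\"onig-type compactness argument. Each $F_n$ depends only on the configuration inside $B_n(o)$ and is a continuity event for the Poisson limit, because two Poisson points almost surely avoid any prescribed distance. The portmanteau theorem then yields $\P(r^{-1}\vXls \in F_n) \too \bar\P(\{o\}\cup\bar X \in F_n)$ in the coupled limit, so that $\limsup \theta(\la, r) \le \bar\P(\{o\}\cup\bar X \in F_n)$ for every $n$; letting $n \uparrow \infty$ yields the upper bound $\bar\theta(\r)$.

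For the matching lower bound under the stabilization assumption, I would run a block renormalization in the spirit of~\cite{aldous, PeWa08}. Fix $\e > 0$ and a large scale $N$. Declare a block $Q_N(Nz)$, $z \in \Z^d$, \emph{good} if a suitably chosen local event holds, for instance the existence of a crossing cluster of the restricted Gilbert graph linking prescribed face-neighbourhoods together with the presence of seed points near each face. The event is designed so that (i) under the limit Poisson process its probability can be pushed arbitrarily close to $1$ by enlarging $N$, thanks to supercritical continuum percolation, (ii) it depends on the configuration in a bounded neighbourhood of the block, so that its $r^{-1}\vXls$-probability converges to the Poisson value by weak convergence, and (iii) an infinite $\Z^d$-cluster of good blocks through the origin forces an infinite cluster through $o$ in $r^{-1}\vXls$. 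Stabilization of $\L$ supplies the approximate factorisation of good-block events across spatially distant blocks, which via the Liggett--Schonmann--Stacey domination theorem allows a comparison with supercritical Bernoulli site percolation. One then concludes that $r^{-1}\vXls$ percolates through $o$ with probability at least $\bar\theta(\r) - \e$ in the coupled limit, and $\e \tod 0$ finishes the proof.

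The hard part is this lower bound: weak convergence alone does not preserve infinite-percolation probabilities, so stabilization must be converted into an honest dependent-percolation comparison. The good-block event must simultaneously be local enough for weak convergence to transfer it, tight enough on the Poisson side to saturate the L--S--S domination threshold, and strong enough to enforce genuine connectivity through the origin in the Cox graph; balancing these three competing demands through a careful choice of block geometry and crossing conditions is where the bulk of the technical work will lie.
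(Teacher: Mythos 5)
Your upper bound is essentially the paper's argument. The paper rescales by $x \mapsto x/r$, invokes weak convergence of the rescaled Cox process to a homogeneous Poisson point process via \cite[Theorem~11.3.III]{daleyPPII2009}, and applies a portmanteau-type argument to the truncated finite-volume percolation probabilities $\theta_{Kr}$; your $F_n$-events play the same role as $\theta_{Kr}$ and the continuity-set argument (interpoint distances avoid the exact value~1 a.s.) is sound. Both approaches deliver $\limsup\theta \le \bar\theta(\rho)$.

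The lower bound is where your sketch has a genuine gap. Your step (iii) --- ``an infinite $\Z^d$-cluster of good blocks through the origin forces an infinite cluster through $o$'' --- is not correct as stated. The good-block event you describe (crossing cluster, seed points near faces) does not reference the origin, so even if the block $Q_N(o)$ is good and lies in an infinite cluster of good blocks, the point $o$ may be an isolated Cox point disconnected from the crossing cluster in its block. If you strengthen the origin's block event to additionally require ``$o$ is connected to the crossing cluster,'' its probability in the limit is roughly $\bar\theta_N(\rho)$, not $1-\e$, and you can no longer push the renormalized site marginal above the Liggett--Schonmann--Stacey threshold and simultaneously extract $\bar\theta(\rho)-\e$ from the percolation of good sites. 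Nowhere in your sketch does the quantity $\bar\theta(\rho)$ actually emerge from the block argument.

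The paper's route avoids this by never trying to bound $\theta$ from below directly via block percolation. Instead, Proposition~\ref{metaProp2} bounds the \emph{gap} $\theta_{Ks(r)}(\la(r),r) - \theta(\la(r),r) = \P(\infty \not\leftrightsquigarrow o \leftrightsquigarrow \partial Q_{Ks(r)})$. Block renormalization (with the uniqueness-of-crossing event $E_r$) and stabilization give, via \cite[Theorem~0.0]{domProd}, an a.s.\ finite random scale $K_0$ at which an interface of good blocks belonging to the infinite good cluster encircles the origin; on the complement event where $o$ escapes $Q_{(K-1)s(r)}$ but is not connected to infinity, no such interface can exist inside $Q_{K-2}$, forcing $K_0 \ge K-2$. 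Sending $K\to\infty$ shows the gap vanishes uniformly in $r$, and combining with the upper-bound convergence $\theta_{Ks(r)} \to \bar\theta_K(\rho) \to \bar\theta(\rho)$ finishes the proof. Your diagnosis of the difficulty is accurate, but the resolution requires this finite-volume comparison rather than a direct renormalized lower bound on $\theta$.
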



For the converse limit $r\tod 0$, $\la\to\infty$ again with $r^d\la=\r$, one cannot hope for such a universal result since the structure of $\L$ becomes prominent also in the limit. In particular, completely different scaling limits emerge for absolutely continuous and singular random intensity measures. 

Let us start with the absolutely continuous case where $\L(\d x) = \ell_x\d x$ with $\ell = \{\ell_x\}_{x \in \R^d}$ a stationary, non-negative random field as in Example~\ref{Ex1}. In this case, $\L^*(\d x) = \ell_x^* \d x$, where $\ell^*$ is the $\ell_0$-size-biased version of $\ell$. Since $\E[\ell_o]=1$, that is,  
\begin{align*}
    \E[f(\ell^*)] =\E[\ell_of(\ell)]
\end{align*}
where $f: [0,\infty)^{\R^d} \too [0, \infty)$ is any bounded measurable function.  Let 
$$L_{\ge} = \{x\in\R^d:\, \ell_x^* \ge \bar\la_c/\r\}$$
denote the superlevel set of $\ell^*$ at level $\bar\la_c/\r$ where $\bar\la_c$ is the critical intensity of the classical Poisson-Boolean model associated to $\bar\theta$. The strict superlevel set $L_>$ is defined accordingly. Then, similarly to the setting analyzed in~\cite{adhoc}, the percolation probability is asymptotically governed by a local and a global constraint, see the left side of Figure~\ref{rlaFig}. Locally, the connected component must leave a small neighborhood around the origin, an event with probability $\theta(\r \ell_o^*)$. Globally, it must be possible to reach infinity along a path of super-critical intensity in $L_{\ge}$, which in the following we denote as $o \leftrightsquigarrow_\L \infty$.

The next result describes the asymptotic behavior of the percolation probability under the assumption that $L_\ge$ is highly connected.

\begin{theorem}\label{rlaAbsThm}
    Let $\r > 0$ and $\ell$ be upper semicontinuous. If, with probability 1, 
    \begin{enumerate}
        \item $\ell_o^* \ne \la_c/\r$,
        \item $\ell^*$ is continuous at $o$, and
        \item the intersection of any connected component of $L_{\ge}$ with $L_>$ remains connected,
    \end{enumerate}
    then
    $$\limsup_{\substack{\la\uparrow\infty,\, r\downarrow 0 \\  \la r^d = \r}}\theta(\la,r) \le \E\big[\bar\theta(\r \ell_o^*) \one\{o \leftrightsquigarrow_\L \infty\}\big].$$
    If, additionally, $\L$ is stabilizing and with a probability tending to 1 in $n$, the set 
    $L_> \cap Q_{5n} \setminus Q_{3n}$
    contains a compact interface separating $\partial Q_{3n}$ from $\infty$.    Then,
    $$\lim_{\substack{\la\uparrow\infty,\, r\downarrow 0 \\  \la r^d = \r}}\theta(\la,r)=\E\big[\bar\theta(\r \ell_o^*)\one\{o\leftrightsquigarrow_\L\infty\}\big].$$
\end{theorem}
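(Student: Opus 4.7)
The plan is to split the claim into matching upper (lim sup) and lower (lim inf) bounds, both organised around the same local/global decomposition. Locally, the rescaled process $r^{-1}\vXls$ restricted to any fixed ball $B_K(o)$ converges in distribution, by Theorem~11.3.III of Daley--Vere-Jones combined with condition~(2), to a homogeneous Poisson process of intensity $\r\ell_o^*$, so that the probability that the cluster of $o$ has rescaled diameter at least $K$ converges to $\bar\theta_K(\r\ell_o^*)$. Globally, the event $\{o\leftrightsquigarrow_\L\infty\}$ controls whether the cluster can reach infinity: regions where $\ell^*<\bar\la_c/\r$ are sub-critical after rescaling and act as barriers.

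For the upper bound I first condition on $\L^*$ and split
$$\theta(\la,r)=\E\bigl[\P(o\leftrightsquigarrow\infty\mid\L^*)\one\{o\leftrightsquigarrow_\L\infty\}\bigr]+\E\bigl[\P(o\leftrightsquigarrow\infty\mid\L^*)\one\{o\not\leftrightsquigarrow_\L\infty\}\bigr].$$
On the second event, upper semicontinuity of $\ell^*$ together with condition~(3) produces a compact set $I\subset\{\ell^*<\bar\la_c/\r\}$ separating $o$ from $\infty$; compactness gives $\ell^*\le\bar\la_c/\r-\delta$ on $I$ for some $\delta>0$, so the restriction $r^{-1}\vXls\cap r^{-1}I$ is dominated by a sub-critical homogeneous Poisson process of intensity $\bar\la_c-\r\delta$, and standard exponential decay of sub-critical Poisson--Boolean connectivity shows that any $\vG(\vXls)$-crossing of $I$ has vanishing probability as $r\tod0$. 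For the first term I use the local Poisson convergence: for fixed $K$, by condition~(2), $\P(o\leftrightsquigarrow\partial B_{Kr}(o)\mid\L^*)$ converges to the finite-$K$ connection probability $\bar\theta_K(\r\ell_o^*)$ of the Poisson--Boolean model; dominated convergence combined with $\bar\theta_K\downarrow\bar\theta$, and condition~(1) excluding the critical value, then gives
$$\limsup_{r\tod0}\theta(\la,r)\le\E\bigl[\bar\theta(\r\ell_o^*)\one\{o\leftrightsquigarrow_\L\infty\}\bigr].$$

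For the lower bound I condition on $\{o\leftrightsquigarrow_\L\infty,\,\ell_o^*>\bar\la_c/\r\}$, the complement contributing nothing since $\bar\theta$ vanishes there. Fix a large $K$. Local Poisson convergence yields, with conditional probability $\bar\theta_K(\r\ell_o^*)-o(1)$, a $\vG(\vXls)$-cluster of $o$ of rescaled diameter at least $K$. To continue this cluster to infinity I exploit the $L_>$-path from $o$ to $\infty$ produced by condition~(3): tile the path with $Kr$-boxes spaced by $Kr/2$, and declare each box good when its rescaled restriction contains a unique giant cluster that crosses it. By local Poisson convergence, each box centred in $L_>$ is good with probability tending to $1$ as $K\uparrow\infty$, and overlapping good boxes automatically merge their giant clusters. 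Stabilisation of $\L$ decouples spatially separated boxes enough to invoke a Liggett--Schonmann--Stacey domination of the good-box sequence by super-critical Bernoulli percolation, while the interface hypothesis supplies, at every scale $n$, a compact super-critical annulus $L_>\cap Q_{5n}\setminus Q_{3n}$ in which the chain can be closed up. Sending $K\uparrow\infty$ and combining with the local escape gives
$$\liminf_{r\tod0}\theta(\la,r)\ge\E\bigl[\bar\theta(\r\ell_o^*)\one\{o\leftrightsquigarrow_\L\infty\}\bigr].$$

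The principal obstacle is the gluing step in the lower bound: a finite cluster must be propagated along an $L_>$-path whose width is controlled only through the interface hypothesis, while reconciling two independent sources of randomness---the environment $\L$ via its stabilisation and the Poisson sampling via super-critical crossings---and two distinct scales, the local Poisson scale $K$ and the macroscopic environment scale $n$. The remaining ingredients, namely local Poisson convergence, sub-critical containment via exponential decay, and the dominated convergence in $\bar\theta_K\downarrow\bar\theta$, are more-or-less standard once aligned with the scaling $\la r^d=\r$.
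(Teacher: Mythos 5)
Your upper bound is essentially the paper's argument: condition on the environment $\ell^*$, split on whether $o$ is connected to infinity in $L_{\ge}$, use upper semicontinuity to extract a compact interface on which $\ell^*$ is bounded away from $\bar\la_c/\r$, invoke local Poisson convergence and subcritical exponential decay for the barrier crossing, and let $\bar\theta_K\downarrow\bar\theta$ under condition (1). The paper phrases this as conditional convergence in bounded domains, fed into an upper-semicontinuity meta-proposition, but the content matches.

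The lower bound has a genuine gap in the gluing step. You tile the $L_>$-path with boxes of side $Kr$ and claim each is good with probability tending to $1$ as $K\uparrow\infty$; but the limiting goodness probability of a box at $x$ is a crossing probability for a Poisson--Boolean model of intensity $\r\ell^*_x$, which tends to $1$ as $K\uparrow\infty$ only at a rate controlled by $\r\ell^*_x-\bar\la_c$. Since $L_>$ is the \emph{strict} superlevel set, $\ell^*$ can accumulate to $\bar\la_c/\r$ along an unbounded path, so no fixed $K$ makes every box good above the domination threshold, and in any case once you condition on $\ell^*$ the box events are already conditionally independent---stabilisation is not the decoupling mechanism you invoke it for. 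The paper instead renormalises \emph{unconditionally} at a fixed macroscopic scale $n$ (independent of $r$ and $K$): a site is good if the stabilisation radius is controlled on $Q_{5n}$ \emph{and} the Gilbert graph in $Q_{5n}$ has a unique crossing component reaching $Q_n$. The interface hypothesis is used precisely to verify the second event has probability above the $7$-dependence threshold as $r\downarrow 0$, because compactness of the interface in $L_>\cap Q_{5n}\setminus Q_{3n}$ supplies a uniform gap above criticality within each renormalisation box. The escape to infinity is then controlled not by pushing a cluster along a path but by a blocking-interface argument: the dominating supercritical Bernoulli process almost surely contains, inside $Q_{K_0}$ for a.s.\ finite $K_0$, an open-site interface separating $o$ from $\infty$ and lying in its infinite cluster; a cluster of the origin reaching $\partial Q_{Kn}$ with $K>K_0+2$ necessarily merges with the infinite component, so $\theta_{Kn}-\theta\le\P(K_0\ge K-2)\to 0$. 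This mechanism is absent from your proposal, and without it the proposed tiling argument does not close.
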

In Example~\ref{Ex1}, for the Poisson-Boolean model, the upper semicontinuity is satisfied for $\la_1\ge\la_2$. Further, assumption (1) -- (3) are satisfied as long as $\la_c/\r\ne \la_1,\la_2$. The additional assumption on the existence of the interface can only be guaranteed for sufficiently high intensity of the underlying Poisson process, \cite{cPerc}. For the shot-noise field similar sufficient conditions can be formulated. 

\medskip
Next, we consider a singular setting as in Example~\ref{Ex2}. The scaling relation in Theorem~\ref{rlaAbsThm} was chosen in such a way, that the expected number of neighboring Cox points remains constant. 
If we were to apply this scaling also in the singular case, then the scaling limit would be trivial. Indeed, with high probability on the majority of all edges some subsequent Cox points are separated by gaps of size at least $r$, so that no percolation can occur. 
This is not a problem in case of absolutely continuous measures, since the continuous support of the underlying random intensity measure allows for percolation in all directions. 
Hence, we consider the more appropriate limit where the expected number of gaps per edge remains constant, see the right side of Figure~\ref{rlaFig}.

In this regime, the limiting behavior is governed by an inhomogeneous Bernoulli bond percolation model on the Palm version $S^*$ for the segment system, where the probability for an edge of length $l$ to be open is given by $b^l$ for a suitable $b>0$. For the Poisson-Delaunay tessellation, a homogeneous version was considered for example in~\cite{Ha00}. We write $\theta_{\ms{Ber}}(b)$ for the resulting percolation probability. The next result makes this precise under the assumption that the expected number of gaps per edge is small.
\begin{theorem}\label{rlaSingThm}
			Let $ c> 0$. Then,
				$$\limsup_{\substack{\la \to \infty,\, r \tod 0\\ \la\exp(-\la r) = c}}\theta(\la, r) \le \theta_{\ms{Ber}}(\exp(-c)).$$
                If $\L$ is essentially asymptotically connected and $c$ is sufficiently small, then
				$$\lim_{\substack{\la \to \infty,\, r \tod 0\\ \la\exp(-\la r) = c}}\theta(\la, r) = \theta_{\ms{Ber}}(\exp(-c)).$$
\end{theorem}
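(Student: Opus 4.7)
The plan is to couple the Cox Gilbert graph to an inhomogeneous Bernoulli bond percolation on the Palm segment system $S^*$. Conditional on $S^*$, the Cox points on distinct segments are independent Poisson processes of intensity $\la$. For each edge $e$ of $S^*$ of length $\ell_e$, call $e$ \emph{crossable} if the Poisson process on $e$ has no inter-point gap larger than $r$, where the two endpoints of $e$ are included as phantom points when measuring gaps. A direct Poisson calculation shows that, conditional on $S^*$, the crossability probability converges as $\la\uparrow\infty$, $r\tod 0$ with $\la e^{-\la r} = c$ to $\exp(-c\ell_e) = b^{\ell_e}$; indeed the number of gaps of size at least $r$ in a Poisson process of rate $\la$ on a segment of length $\ell_e$ is asymptotically Poisson with mean $\la \ell_e e^{-\la r} = c\ell_e$.

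For the upper bound I would show that Cox percolation forces percolation of crossable edges on $S^*$. An infinite cluster in $\vG(\vXls)$ must visit infinitely many segments, and the sub-chain of Cox points on each visited segment forces it to be crossable, up to the asymptotically negligible event that Cox points on distinct segments come within distance $r$ away from a shared vertex of $S^*$. Since conditional on $S^*$ the crossability events across distinct edges are independent, they produce an inhomogeneous Bernoulli model with edge probabilities converging pointwise to $b^{\ell_e}$, and a coupling on finite windows combined with monotonicity yields $\limsup \theta(\la,r) \le \theta_{\ms{Ber}}(b)$.

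For the lower bound I would work with a \emph{strong crossability} variant, additionally requiring Cox points within $r/2$ of each endpoint of $e$. Since $\la r \uparrow \infty$ in the coupled regime (as $\la e^{-\la r}$ stays bounded), the probability of having a Cox point within $r/2$ of an endpoint equals $1 - e^{-\la r/2}\uparrow 1$, so strong crossability still has probability approaching $b^{\ell_e}$. On a strongly crossable edge, the extreme Cox points near any shared vertex lie within distance $r$ of those of any other strongly crossable edge incident at that vertex, so segment-chains glue along vertices into a single connected component of the Gilbert graph, and hence an infinite strongly-crossable open cluster in $S^*$ produces an infinite Cox cluster through the origin. Asymptotic essential connectedness of $\L$ supplies the geometric backbone through a block-argument at scale $n$, furnishing arbitrarily large connected regions of $\supp(\L)$ on which the analysis can be carried out; the hypothesis that $c$ be small ensures that $b = \exp(-c)$ is comfortably supercritical for the resulting inhomogeneous Bernoulli model, so that its infinite open cluster exists with the required probability.

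The main obstacle is the passage from pointwise convergence of individual edge-open probabilities to convergence of the global percolation probability. Percolation is not continuous in its parameters at criticality, and here the edge probabilities are random through the $\ell_e$ and correlated through $S^*$. The smallness assumption on $c$ provides a safe distance from criticality, so that the coupling between the ``pre-limit'' crossability percolation and the limiting Bernoulli-$b$ percolation absorbs the approximation errors, in particular those arising from the vertex-gluing at high-degree vertices of $S^*$ and from the boundary of the block used in the renormalization.
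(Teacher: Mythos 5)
Your proposal follows essentially the same strategy as the paper's proof: both introduce an intermediate Bernoulli-type bond percolation on $S^*$ via crossability of edges (no gaps larger than $r$), both compute the limiting edge-survival probability $\exp(-c|e|)$ (you via Poisson approximation of the number of large gaps, the paper via a sandwich with Poisson concentration of the point count), both handle vertex gluing by requiring Cox points near the endpoints of each edge, and both use a block/renormalization argument at a fixed scale $n$ together with stabilization and asymptotic essential connectedness for the lower bound, with smallness of $c$ guaranteeing supercriticality. Your sketch is less explicit than the paper about the meta-propositions (upper semicontinuity of finite-window percolation probabilities for the $\limsup$, and the $q_d$-threshold tightness argument over the $b$-dependent renormalized lattice for the $\liminf$), but the skeleton matches.
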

The rest of this paper is structured as follows. First in the remainder of the present section, we outline the proofs of the main results. Next, in Section~\ref{Ex}, we provide examples. In Section~\ref{Sim}, we present numerical simulations for the percolation probability. Section~\ref{PhaseTransition} contains the proofs for non-trivial phase transitions. Finally, Section~\ref{limitSec1} is devoted to the large-radius and high-density limit and in Section~\ref{limitSec2} we deal with coupled limits. 

\subsection{Outline of proofs}
\label{outlineSec}

\subsubsection{Phase transition}
The proof is based on a renormalization argument. More precisely, the stabilization condition makes it possible to create a suitable $b$-dependent auxiliary percolation process, which in turn can be analyzed using the techniques from~\cite{domProd}. The existence of a sub-critical phase is easier to establish, since it suffices to create large regions without any points. For the super-critical regime, more care must be taken in order to produce appropriate connected components. It is at this point that we use the assumption on the asymptotic essential connectedness.

\subsubsection{Large-radius and high-density limits}
\label{hdSec}
For the lower bounds in Theorem~\ref{rlowLDPThm} and~\ref{lalowLDPThm} we consider isolation probabilities of $r$-connected sets containing the origin. If $A \in \mc{R}_r$ is such that $\vXls \cap \partial_r A = \es$, then the points in $A$ are contained in a different connected component of $\vG(\vXls )$ than the points in $\R^d \setminus (A \cup \partial_r A)$. In particular, 
\begin{align}
    \label{isoBoundEq}
    1 - \theta(\la, r) \ge \P(\vXls \cap \partial_r A = \es) = \E[\exp(-\la \L^*(\partial_r A))].
\end{align}
For any $\delta > 0$, this expression is at least
\begin{align*}
    \P(\Lambda^*(\partial_r A) \le \delta + \essinf\Lambda^*(\partial_r A)) \exp(-\lambda(\delta + \essinf\Lambda^*(\partial_r A))),
\end{align*}
which gives the lower bound in Theorem~\ref{lalowLDPThm}. On the other hand, choosing $A = \{o\}$ in~\eqref{isoBoundEq}, taking the logarithm, dividing by $r^d$ and sending $r \to \infty$ gives the lower bound in Theorem~\ref{rlowLDPThm}. The upper bounds in both Theorem~\ref{rlowLDPThm} and~\ref{lalowLDPThm} follow from a Peierls argument in Section~\ref{limitSec1}.

\subsubsection{Coupled limits}\label{Meta}
The proofs of Theorems~\ref{larThm},~\ref{rlaAbsThm} and~\ref{rlaSingThm} are all based on two meta-results. The upper bounds rely on convergence in finite domains. To make this precise, we say that $o \leftrightsquigarrow \partial Q_a$ in $\vG(\vXls)$ if the connected component of $o$ in $\vG(\vXls)$ is not contained in $Q_{a - 2r}$. Moreover, we put 
\begin{align}\label{FinPercProb}
   \theta_a = \P(o \leftrightsquigarrow \partial Q_a).
\end{align}
Now, convergence in finite domains provides the desired upper bounds by virtue of the following elementary upper-semicontinuity result.

    \begin{proposition}[Upper bound via convergence in bounded domains]
        \label{metaProp1}
         Let  $\lambda: [0,\infty) \too [0, \infty)$ be monotone and $s: [0,\infty) \too [1, \infty)$ an increasing function. Let $r_\infty \in \{0, \infty\}$ and $\{a_K\}_{K > 0}$ be a decreasing function such that for every $K > 0$,
        $$\lim_{r \too r_\infty} \theta_{Ks(r)}(\la(r), r) = a_K.$$
        Then,
        $$\limsup_{r \too r_\infty} \theta(\la(r), r) \le \lim_{K \to \infty} a_K.$$
    \end{proposition}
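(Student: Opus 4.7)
The plan is to bound the full percolation probability $\theta(\lambda(r),r)$ by the finite-domain probability $\theta_{Ks(r)}(\lambda(r),r)$ for each fixed $K$, then invoke the hypothesis to send $r\to r_\infty$, and finally let $K\to\infty$. The crucial event inclusion is elementary: if the cluster of $o$ in $\vG(\vXls)$ is infinite, it cannot be contained in any bounded set; in particular, whenever $Ks(r)>2r$, the cluster must meet $\partial Q_{Ks(r)-2r}$, so that
$$\{o\leftrightsquigarrow\infty\}\subset\{o\leftrightsquigarrow\partial Q_{Ks(r)}\},$$
and consequently $\theta(\lambda(r),r)\le\theta_{Ks(r)}(\lambda(r),r)$. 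Neither the monotonicity of $\lambda$ nor the precise form of $s$ is used at this stage beyond the inequality $Ks(r)>2r$.

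Next I take $\limsup_{r\to r_\infty}$ on both sides and apply the assumption $\lim_{r\to r_\infty}\theta_{Ks(r)}(\lambda(r),r)=a_K$ to obtain $\limsup_{r\to r_\infty}\theta(\lambda(r),r)\le a_K$ for each admissible $K$. Since the left-hand side does not depend on $K$ and the family $\{a_K\}_{K>0}$ is decreasing (so the monotone limit $\lim_{K\to\infty}a_K$ exists in $[0,\infty)$), the bound persists in the limit $K\to\infty$, which is exactly the conclusion.

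The only delicate point is the technical condition $Ks(r)>2r$, required both for the definition of $\theta_{Ks(r)}$ and for the containment of events above. If $r_\infty=0$, this is automatic for $r<K/2$ since $s(r)\ge 1$; if $r_\infty=\infty$, the stated hypotheses alone do not force it, but in each intended application of the proposition (for instance in the $r\uparrow\infty$, $\lambda\downarrow 0$ regime of Theorem~\ref{larThm}) $s$ will be chosen so that $s(r)$ grows at least linearly in $r$, and the condition then holds for every sufficiently large $K$, which is all that the $K\to\infty$ step needs. Apart from this bookkeeping, there is no genuine obstacle in the argument; the proposition is really just a monotone comparison packaged for later use.
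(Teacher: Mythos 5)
Your argument is correct and is essentially the same monotone-comparison packaged in the paper: both proofs rest on the trivial inclusion $\{o\leftrightsquigarrow\infty\}\subset\{o\leftrightsquigarrow\partial Q_{Ks(r)}\}$, hence $\theta\le\theta_{Ks(r)}$, then take $\limsup_{r\to r_\infty}$ to get $a_K$ and let $K\to\infty$. The side discussion about requiring $Ks(r)>2r$ is unnecessary: the inclusion holds for every $a>0$ (an infinite cluster is never contained in any $Q_{a-2r}$, even when $a-2r\le 0$, in which case the event $o\leftrightsquigarrow\partial Q_a$ is vacuously true), so no extra bookkeeping in the $r_\infty=\infty$ case is needed.
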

For the lower bound, we use a tightness condition based on a renormalized  percolation process.

    \begin{proposition}[Lower bound via tightness]
        \label{metaProp2}
         Let $\la, r_\infty, s$ and $\{a_K\}_{K > 0}$ be as in Proposition~\ref{metaProp1} and assume that $\L$ is stabilizing. Let $E_r$ denote the event that
         \begin{enumerate}
             \item  $\vG(X^\la)\cap Q_{5s(r)}$ contains a unique connected component intersecting both $\partial Q_{3s(r)}$ and $\partial Q_{5s(r)}$, and 
             \item this component also intersects $Q_{s(r)}$.
         \end{enumerate}
        There exists a constant $q_d \in (0,1)$, only depending on the dimension, such that if for all sufficiently large $r$,
         \begin{align}
             \label{metaCond}
             \min\{\P(\sup_{y\in Q_{5s(r)} \cap \Q^d}R_y < s(r)),  \P(E_r)\} > q_d,
         \end{align}
          then
        $$\lim_{r \too r_\infty} \theta(\la(r), r) = \lim_{K \to \infty} a_K.$$
    \end{proposition}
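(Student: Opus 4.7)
\emph{Proof plan.} By Proposition~\ref{metaProp1} and the inclusion $\{o \leftrightsquigarrow \infty\} \subset \{o \leftrightsquigarrow \partial Q_{Ks(r)}\}$, the statement reduces to showing that
\begin{align*}
    \lim_{K \to \infty} \limsup_{r \to r_\infty} \P\big(o \leftrightsquigarrow \partial Q_{Ks(r)},\ o \not\leftrightsquigarrow \infty\text{ in }\vG(\vXls)\big) \,=\, 0.
\end{align*}
The plan is to build an infinite cluster in $\vG(\vXl)$ by renormalization on the scale $n = s(r)$, and then use a Peierls surface estimate in the renormalized lattice to prevent the cluster of $o$ from being cut off from this infinite cluster before it exits $Q_{Ks(r)}$.

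\medskip

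I would declare $z \in \Z^d$ \emph{good} if $\sup_{y \in Q_{5n}(nz) \cap \Q^d} R_y < n$ and the event $E_r$ shifted by $nz$ occurs. By \eqref{metaCond} and a union bound, $\P(z\text{ good}) > 2q_d - 1$, and by Definition~\ref{Stab} together with the conditionally Poisson structure of $\vXl$ given $\L$, good-ness of $z$ is a function of $(\L, \vXl)$ restricted to $Q_{5n}(nz)$. Hence $\pi(z) = \one\{z\text{ good}\}$ forms a translation-invariant $k$-dependent site percolation on $\Z^d$ for some $k = k(d)$. For $q_d$ close enough to $1$, the Liggett-Schonmann-Stacey theorem~\cite{domProd} yields that $(\pi(z))$ stochastically dominates a Bernoulli site percolation of parameter arbitrarily close to $1$, so that good sites almost surely have a unique infinite $\star$-cluster $\mc{C}_\infty$ and the Peierls count bounds the probability of any bad $\star$-circuit of $\star$-size at least $\ell$ surrounding the origin by $\exp(-c\ell)$ with $c$ arbitrarily large.

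\medskip

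The key geometric input is that $E_r$ forces a \emph{unique} component of $\vG(\vXl) \cap Q_{5n}(nz)$ to span the annulus from $\partial Q_{3n}(nz)$ to $\partial Q_{5n}(nz)$, and this component also meets the central sub-box $Q_n(nz)$. For $\star$-adjacent good sites $z, z'$, the crossing at $z'$ meets $Q_n(nz') \subset Q_{3n}(nz)$, so it contributes a vertex inside $Q_{3n}(nz)$ whose component in $\vG(\vXl) \cap Q_{5n}(nz)$ is forced by uniqueness to coincide with the crossing at $z$; consequently, the crossings over all sites of $\mc{C}_\infty$ assemble into a single infinite component $\mc{K}_\infty$ of $\vG(\vXl)$. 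If in $\vG(\vXls)$ the cluster of $o$ reaches $\partial Q_{Ks(r)}$ yet remains finite, then it is disconnected from $\mc{K}_\infty$, which forces a $\star$-connected bad-site circuit of $\star$-size at least $c_d K^{d-1}$ between the cluster of $o$ and infinity. The Peierls estimate above bounds the probability of such a circuit by a quantity vanishing as $K \to \infty$ uniformly in $r$, and passing to the Palm process via the Campbell-Mecke formula completes the argument.

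\medskip

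The main obstacle is the geometric linking step in the third paragraph: the claim that the unique annulus-crossings of two $\star$-adjacent good boxes merge in $\vG(\vXl)$ hinges on the asymmetric scale cascade $Q_n \subset Q_{3n} \subset Q_{5n}$ built into $E_r$, which is engineered precisely so that the inner sub-box $Q_n(nz')$ of a $\star$-neighbor lies inside the annulus core $Q_{3n}(nz)$ of the given box and uniqueness in each box propagates into a forced merger across the overlap.
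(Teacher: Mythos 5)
Your overall strategy matches the paper's: reduce via Proposition~\ref{metaProp1} to controlling $\P(\infty \not\leftrightsquigarrow o \leftrightsquigarrow \partial Q_{Ks(r)})$, pass to the non-Palm Cox process, declare a renormalized site good when the stabilization bound and a shift of $E_r$ both hold, invoke~\cite[Theorem 0.0]{domProd} to dominate from below by highly supercritical Bernoulli site percolation, and chain the unique annulus crossings at good sites in the infinite good cluster $\mc{C}_\infty$ into a single infinite component $\mc{K}_\infty$ of $\vG(\vXl)$. The paper then simply invokes the existence of an a.s.~finite $K_0$ such that $Q_{K_0}$ contains an interface of $\mc{C}_\infty$-sites separating $o$ from $\infty$, and uses $\P(K_0 \ge K - 2) \to 0$; you instead unroll this into an explicit Peierls count, which is the right idea but is not carried out correctly.

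The Peierls step has a genuine gap. You claim that disconnection of $\mc{C}_o$ from $\mc{K}_\infty$ while reaching $\partial Q_{Ks(r)}$ forces a $\star$-connected \emph{bad}-site circuit of $\star$-size at least $c_d K^{d-1}$ around $o$. Neither half is right. First, the minimal size of a $\star$-connected separating set around a set of renormalized diameter $\sim K$ is of order $K$, not $K^{d-1}$: if $\mc{C}_o$ is a thin tube out to $\partial Q_{Ks(r)}$, its enclosing cutset need only be comparable to the tube's boundary, of order $K$. Second, and more seriously, the separating set need not consist of bad sites. Uniqueness only delivers a contradiction at a site $z$ when $\mc{C}_o$ actually spans the annulus at $z$ \emph{and} $z \in \mc{C}_\infty$; sites that $\mc{C}_o$ merely clips without spanning, or good sites sitting in finite good pockets, are admissible in the separator. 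What disconnection actually forces is a $\star$-connected set of linear order in $K$ of sites \emph{not in $\mc{C}_\infty$}, and one then needs the (standard, but different) estimate that the not-in-$\mc{C}_\infty$ $\star$-cluster near the origin decays exponentially in size for highly supercritical Bernoulli. This is precisely the content the paper packages into the tightness of $K_0$; as stated your ``bad circuit'' count does not cover it. A smaller point: the geometric linking step (that adjacent unique annulus crossings merge) is asserted by both you and the paper without detail; you are right to flag it as the crux, but your one-line justification that the vertex in $Q_{3n}(nz)$ has its $Q_{5n}(nz)$-component ``forced by uniqueness to coincide'' with the crossing at $z$ presupposes, without argument, that this component itself spans the annulus at $z$.
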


\section{Examples}\label{Ex}
\subsection{Stabilization and asymptotic essential connectedness}
Let us discuss the case of Poisson-Voronoi tessellations in order to show that the assumptions of stabilization and asymptotic essential connectedness are indeed satisfied by a large class of Cox processes. The case of Poisson-Delaunay tessellations can be dealt with in a similar fashion. 
\begin{example}
    \label{vorEx}
    Let $\L(\d x)=\nu_1(S\cap \d x)$ where $S = h(\vXs)$ is the Poisson-Voronoi tessellation based on the homogeneous Poisson point process $\vXs$ with intensity $\vls$. More precisely, to each $\vXsi \in \vXs$ we associate the cell 
    $$\Xi_i = \{x \in \R^d:\, |x - \vXsi| = \dist(x, \vXs)\}$$
    consisting of all points in $\R^d$ having $X_{\ms{S}, i}$ as the closest neighbor. Then, $S$ is the union of the one-dimensional facets of the collection of cells $\{\Xi_i\}_{i \ge 1}$.

\medskip
    We claim that the random measure in Example~\ref{vorEx} is asymptotically essentially connected. Let us start by verifying the stabilization. We define the radius of stabilization 
 $$
R_x = \inf\{|\vXsi-x|:\, \vXsi \in \vXs\}.
$$ 
    to be the nearest neighbor distance in the underlying Poisson point process. 
Let us check the conditions. 
     First, by stationarity of $\vXs$, also $(h(\vXs),R)$ is translation invariant. Second, let $Q^{(1)}, Q^{(2)}, \ldots, Q^{(d^d)}$ be a sub-division of $Q_n$ into congruent sub-cubes of side length $n/d$. Then, 
\begin{align*}
    1 - \P(\sup_{y\in Q_n\cap\, \Q^d}R_y < n) \le \sum_{i = 1}^{d^d}\P(Q^{(i)}\cap \vXs = \es ) \le d^d \P(Q^{(1)}\cap \vXs = \es) = d^de^{-\vls (n/d)^{d}}
\end{align*}
where the right hand side tends to 0 exponentially fast as $n \to \infty$. This property is referred to in the literature as exponential stabilization, see \cite{PeWa08}. Finally, for almost all realizations of $S$ and for all $x\in S\cap Q_{n}$, by the definition of the Poisson-Voronoi tessellation, there exist unique points $X_{\ms S,1},\dots, X_{\ms S,m}\in\vXs$ such that $|X_{\ms S,1}-x|=\cdots=|X_{\ms S,m}-x|=\inf\{|\vXsi-x|:\, \vXsi\in\vXs\}$. The number of such points depends on the dimension of the facet of $S$ containing $x$. Under the event $\sup_{y\in Q_n\cap\, \Q^d}R_y < n$ we thus have $|X_{\ms S,1}-x|=\cdots=|X_{\ms S,m}-x| \le n$.
In particular, a change in the configuration $\vXs\cap \R^d\sm Q_n(x)$ leaves $x$ unaffected and thus $S\cap Q_n$ is independent of $\vXs\cap \R^d\sm Q_{3n/2}$. Since also the event $\sup_{y\in Q_n\cap\,\Q^d}R_y < n$ is independent of $\vXs\cap\R^d\sm Q_{3n/2}$, the last condition is verified.

\medskip
The Poisson-Voronoi tessellation is also asymptotically essentially connected, which we show now. In order to show the non-emptyness of the support, note that emptyness of the support, i.e. $S\cap Q_n=\es$, implies that $Q_n$ is contained in the Voronoi cell $\X_i$ of some $\vXsi\in\vXs$. Choose any two points $x,y\in Q_n\subset \X_i$ with $n<|x-y|$. Then, under the event $\sup_{z\in Q_{n}\cap\,\Q^d}R_z < n/2$, the points $x,y$ must have some distinct Poisson points $X, Y\in\vXs$ with $|x-X|<n/2$ and $|y-Y|<n/2$ and hence, 
\begin{align}\label{Contra}
n<|x-y|\le |x-\vXsi|+|y-\vXsi|\le  |x-X|+|y-Y|<n, 
\end{align}
a contradiction. As for the connectedness, denote by $S_1,\dots, S_k$ the connected components in $S\cap Q_n$. By the definition of the Voronoi tessellation, every void space $V$, which separates two of the connected components in $Q_n$, must be the intersection of one Voronoi cell $\X$ with $Q_n$.
Let $\partial\Xi$ denote the boundary of $\Xi$. 
We claim that, under the event $\sup_{y\in Q_{2n}\cap\, \Q^d}R_y < n/2$, we have that $\partial\X\cap Q_n$ is connected in $Q_{2n}$. Indeed, let $\vXsi\in\vXs$ be such that $\Xi=\Xi_i$ then, since $\partial\Xi_i$ contains points in $Q_n$, we have $\vXsi\in Q_{2n}$. 
If $\vXsi\in Q_n$ then $\Xi_i\subset Q_{2n}$ since for all $x\in \X_i$ we have $|x-\vXsi|<n/2$ which implies that $x\in Q_{2n}$. Hence, in this case the associated disconnected components in $Q_n$ must be connected in $Q_{2n}$. On the other hand, if $\vXsi\in Q_{2n}\sm Q_n$ then there exists a chord in $\Xi_i$ starting at $\vXsi$ and crossing $Q_n$ completely. But any such chord has maximal length $n/2$, and thus again the associated disconnected components in $Q_n$ must be connected in $Q_{2n}$.
Since the argument holds for any void space $V$, we have connectedness of all $S_1,\dots, S_k$ in the larger volume $Q_{2n}$.
\end{example}

\subsection{Computation of the rate function in Theorem~\ref{rlowLDPThm} for the shot-noise field}
Let $\L$ be the shot-noise field with $\ell_x=\sum_{X_i\in \vXs}k(x-X_i)$ for some non-negative integrable $k$ with compact support and $\vXs$ be a Poisson point process with intensity $\vls>0$.
Then, for $K=\int k(x)\d x$ we can calculate, 
    \begin{align*}
        \log \E[\exp(-\la\L(Q_r))]&=\log \E[\exp(-\la \int_{Q_r}\sum_{X_i\in\vXs}k(x-X_i)\d x)]\\
        &=\vls\int  \, (e^{-\la \int_{Q_r}\,  k(x-y)\d x}-1)\d y\\
        &=\vls\int  \, (e^{-\la \int_{Q_r(y)}\,  k(x)\d x}-1)\d y.
    \end{align*}
    By separating the domain of integration into $Q_r$ and $\R^d\setminus Q_r$, we arrive at
    \begin{align*}
        r^d\vls(e^{-\la K}-1)+\vls\int_{Q_r} \, (e^{-\la \int_{Q_r(y)}\,  k(x)\d x}-e^{-\la K})\d y +\vls\int_{\R^d\sm Q_r}  \, (e^{-\la \int_{Q_r(y)}k(x)\d x}-1)\d y.
    \end{align*}
We claim that the second and third summand in the last line are of order $o(r^d)$. Indeed, let $r_0>0$ be large enough such that the support of $k$ is contained in $Q_{r_0}$. Then, for the second summand, we have for $r> r_0$ that
    \begin{align*}
       0\le  \int_{Q_r} \, (e^{-\la \int_{Q_r(y)}k(x)\d x}-e^{-\la K})\d y &=\int_{Q_{r-r_0}}  \, (e^{-\la \int_{Q_r(y)}k(x)\d x}-e^{-\la K})\d y\\
        &\qquad+\int_{Q_r\sm Q_{r-r_0}}  \, (e^{-\la \int_{Q_r(y)}k(x)\d x}-e^{-\la K})\d y\\
 &\le |Q_r\sm Q_{r-r_0}|.
    \end{align*}
For the third summand, using similar arguments, 
        \begin{align*}
      0\ge  \int_{\R^d\sm Q_r} \, (e^{-\la \int_{Q_r(y)}\,  k(x)\d x}-1)\d y =\int_{Q_{r+r_0}\sm Q_r}  \, (e^{-\la \int_{Q_r(y)}\,  k(x)\d x}-1)\d y\ge -|Q_{r+r_0}\sm Q_{r}|.
    \end{align*}

\section{Simulations}\label{Sim}
\subsection{Simulations}
In order to provide numerical illustrations for the main mathematical theorems, we estimated the actual percolation probability for a variety of parameters via Monte Carlo simulations. More precisely, $\Lambda$ is assumed to be the random measure given by the edge length of a planar tessellation as in Example~\ref{Ex2}. Here, we consider either Poisson-Voronoi tessellation or Poisson-Delaunay tessellation, and fix the length intensity $\E[\Lambda(Q_1)] = 20$. 
\begin{figure}[!htbp]
    \includegraphics[width=.31\textwidth]{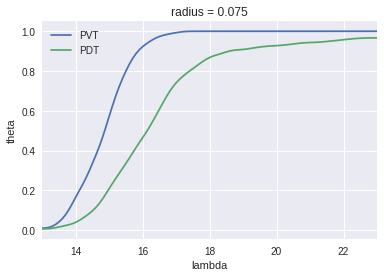}
        \includegraphics[width=.31\textwidth]{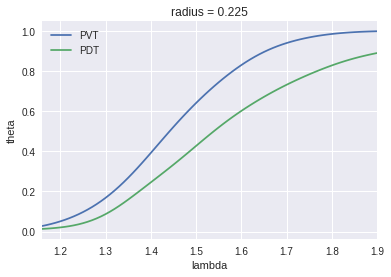}
        \includegraphics[width=.31\textwidth]{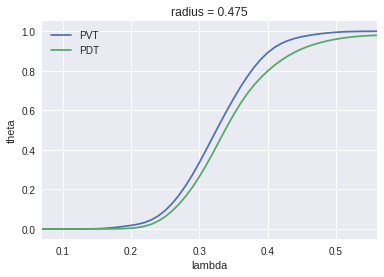}
                    \caption{Percolation probability as a function of intensity for different radii and random intensity measures.}
                                    \label{simFig}
\end{figure}

In Figure~\ref{simFig}, we present the estimated percolation probability $\theta(\la, r)$ as a function of the parameter $\la$ for three choices of the radii: $r = 0.075$, $r = 0.225$ and $r = 0.475$. In Theorem~\ref{larThm} we have seen that in the asymptotic setting of a large radius and small intensity, the percolation probability does not depend on the choice of the random intensity measure. It converges to the percolation probability of the Poisson-Boolean model. This behavior is reflected in the right-most panel of Figure~\ref{simFig} where $r = 0.475$, as there is very little difference between the percolation probability in the Voronoi or Delaunay setting. For $r = 0.225$, we see that the geometry of the random intensity measure influences substantially the percolation probability. This is even more prominent for smaller radii, such as $r = 0.075$. Indeed, Theorem~\ref{rlaSingThm} describes the behavior for small radii, also in the asymptotic regime, but here the dependence of the percolation probability on the underlying random intensity measure is not lost in the limit.

\section{Proof of phase transitions}\label{PhaseTransition}
The main idea is to introduce a renormalization scheme reducing the continuum percolation problem to a dependent lattice percolation problem. To make this work, we rely crucially on the stabilization assumption. It allows us to make use of the standard $b$-dependent percolation arguments presented in~\cite[Theorem 0.0]{domProd}.
\subsection{Existence of sub-critical phase}
In the renormalization we single out large regions that do not contain any Cox points and where one has good control over the spatial dependencies induced by $\L$. In the following, we put $R(Q_n(x)) = \sup_{y\in Q_n(x)\cap\,  \Q^d}R_y$, where $R_y$ is the stabilization radius associated to $\L$.
\begin{proof}[Proof of Theorem~\ref{subCritStabThm}]
A site $z \in \Z^d$ is \emph{$n$-good} if 
	\begin{enumerate}
		\item $R(Q_{n}(nz))<n$ and
		\item $\vXl \cap Q_n(nz) = \es$
	\end{enumerate}
	
	A site $z \in \Z^d$ is \emph{$n$-bad} if it is not $n$-good.
	By property (2), percolation of the Gilbert graph implies percolation of $n$-bad sites. Hence, it suffices to verify that $n$-bad sites do not percolate if $\la$ is sufficiently small. 

The process of $n$-bad sites is $3$-dependent as can be seen from the definition of stabilization. Moreover, 
\begin{align*}
\limsup_{n\uparrow\infty}\limsup_{\la\downarrow0}\P(z\text{ is }n\text{-bad})
&\le\limsup_{n\uparrow\infty}\limsup_{\la\downarrow0}\big(\P(R(Q_{n})\ge n) + 1-\E(e^{-\la\L(Q_{n})})\big)=0.
\end{align*}	
By~\cite[Theorem 0.0]{domProd}, we conclude that the process of $n$-bad sites is stochastically dominated by a sub-critical Bernoulli percolation process. In particular, with probability one, there is no infinite path of $n$-bad sites.
\end{proof}

\subsection{Existence of super-critical phase}
This time, our goal is to identify large regions where the support of $\L$ is well-connected and the Cox points are densely distributed on the support of $\L$ in these regions. 
\begin{proof}[Proof of Theorem~\ref{supCritThm}]
A site $z \in \Z^d$ is \emph{$n$-good} if 
	\begin{enumerate}
		\item $R(Q_{6n}(nz))<n/2$, 
		\item $\vXl \cap Q_n(nz) \ne \es$ and 
		\item every $X_i, X_j \in \vXl \cap Q_{3n}(nz)$ are connected by a path in $\vG(\vXl) \cap Q_{6n}(nz)$.
	\end{enumerate}
    We claim that if there exists an infinite connected component $\mc{C}$ of $n$-good sites, then $\vG(\vXl)$ percolates. Indeed, 
    let $z,z'\in \mc{C}$ and $X_i, X_{i'} \in \vXl$ be such that $X_i\in Q_n(nz)$, $X_{i'}\in Q_n(nz')$. If $\{z, z_1, \ldots,z'\} \subset \mc{C}$ is any finite path connecting $z$ and $z'$, by property (2) we can choose points $X_{j} \in \vXl \cap Q_n(nz_{j})$ for every $j\ge1$. Using property (3), we see that these points as well as $X_i$ and $X_{i'}$ are contained in a connected component in $\vG(\vXl)$. This gives the existence of an infinite cluster. 

It remains to show that the process of $n$-good sites is supercritical for sufficiently large $\la$.
The process of $n$-good sites is $7$-dependent as can be seen from the definition of stabilization. Moreover, writing $A,B,C$ for the events (1), (2) and (3) we have 
	\begin{align*}
\limsup_{n\uparrow\infty}\limsup_{\la\uparrow\infty}\P(z\text{ is }n\text{-bad})
&=\limsup_{n\uparrow\infty}\limsup_{\la\uparrow\infty}\P(A^c\cup B^c\cup C^c)\\
        &\le\limsup_{n\uparrow\infty}\limsup_{\la\uparrow\infty}\big(\P(A^c) + \P(B^c\cap A)+\P(C^c\cap A)\big)
\end{align*}
    where $\limsup_{n\uparrow\infty}\P(A^c)\le 12^d\limsup_{n\uparrow\infty}\P(R(Q_{n/2})\ge n/2) = 0$ by stabilization. Further, by condition (1) in Definition~\ref{aecDef} and dominated convergence,
\begin{align*}
    \lim_{\la\uparrow\infty}\P(B^c\cap A)&\le \lim_{\la\uparrow\infty}\E[e^{-\la\L(Q_n)}\one\{R(Q_{2n})<n/2\}]\le\lim_{\la\uparrow\infty}\E[e^{-\la\L(Q_n)}\one\{\L(Q_{n})>0\}] = 0.
\end{align*}
 Finally, by asymptotic essential connectedness, if $R(Q_{6n})<n/2$, then there exists a connected component $\D$ with $\supp(\L_{Q_{3n}})\subset\D\subset\supp(\L_{Q_{6n}})$. Note that the support of $\L$ will be filled with Cox points in the $\la\uparrow\infty$ limit and thus any two points in $\supp(\L_{Q_{3n}})$ must be connected eventually. More precisely, let $(Y_i)_{1 \le i \le N}$ be a finite set of points in $\D$ such that 
 $\D\subset\bigcup_{1\le i\le N}\D_i$ where $\D_i=B_{r/4}(Y_i)$.
Further, let
$$D = \{\vXl\cap \D_i\neq\emptyset\text{ for all }1 \le i \le N\}$$ 
    denote the event that every $\D_i$ contains at least one Cox point. Then, $D\cap A \subset C\cap A$ yields that $\P(C^c\cap A)\le \P(D^c\cap A)$.
In addition, by asymptotic essential connectedness,
\begin{align*}
    \P(D^c\cap A) \le \sum_{1 \le i \le N}\E[e^{-\la\L(\D_i)}\one\{R(Q_{6n})<n/2\}] \le \sum_{1 \le i \le N}\E[e^{-\la\L(\D_i)}\one\{\L(\D_i)>0\}].
\end{align*}
Now using dominated convergence,
\begin{align*}
\lim_{\la\uparrow\infty}\sum_{1 \le i \le N}\E[e^{-\la\L(\D_i)}\one\{\L(\D_i)>0\}] = 0,
\end{align*}
as asserted.

Again by~\cite[Theorem 0.0]{domProd}, the process of $n$-good sites is stochastically dominated from below by a super-critical Bernoulli percolation process, as required.
\end{proof}

\section{Proofs of Theorems~\ref{rlowLDPThm} and~\ref{lalowLDPThm}}
\label{limitSec1}
\subsection{Proof of Theorem~\ref{rlowLDPThm}}\label{CoxPercProof}
As explained in Section~\ref{hdSec}, we only need to prove the upper bound together with the existence of the limiting cumulant generating function $I^*$. We prove the assertion for finite domains and then rely on a Peierls argument to establish that the $\{\theta_{Kr}\}_{K \ge 1}$ (as defined in~\eqref{FinPercProb}), $K \ge 1$ form exponentially good approximations of $\theta$ in the sense of~\cite[Definition 4.2.10]{dz98}. We put $\k_d=|B_1(o)|$. To prove Theorem~\ref{rlowLDPThm}, we proceed in two steps:
\begin{enumerate}
    \item for every $\la > 0$ and $K \ge 1$,
        $$\limsup_{r \to \infty} r^{-d} \log (1 - \theta_{Kr}(\la, r)) \le -\kappa_dI^*,$$
    \item for every $\la > 0$ and all sufficiently large $K \ge 1$,
        $$\limsup_{r \to \infty} r^{-d} \log (\theta_{Kr}(\la, r) - \theta(\la, r)) < -\kappa_dI^*.$$
\end{enumerate}
The idea for the upper bound on finite domains is to consider the convex hull of the cluster at the origin. In particular, there are no Cox points in a forbidden volume formed by all points
outside the convex hull but within distance at most $r$ of one of its vertices. By Steiner's formula from convex geometry~\cite[Theorem 1.1]{steiner}, the volume of this set is at least $\kappa_dr^d$, so that we arrive at the desired upper bound for finite $K$.

To prove the exponentially good approximation, note that if the cluster at the origin is finite but percolates beyond $Q_{Kr}$, then we can define a substantially larger forbidden volume, giving rise to a faster exponential decay.

The main ingredient in the proof is a large deviation formula for the Laplace transform of the random measure $\L^*$ in a large set.
\begin{lemma}
	\label{concLenLem}
  Let $\la > 0$ and assume that $\L$ is $b$-dependent and $\L(Q_1)$ has all exponential moments.
    \begin{enumerate}
        \item Then the following limit exists
            $$I^* = -\lim_{r \to \infty} r^{-d} \log\E[\exp(-\la\L(Q_r))].$$
        \item Let $A \subset \R^d$ be compact with $|\partial A| = 0$ and $A \ne \partial A$. Then,
    $$\lim_{r \to \infty} r^{-d} \log\E[\exp(-\la \L^*(rA))] = -|A| I^*.$$
    \end{enumerate}
\end{lemma}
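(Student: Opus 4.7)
The argument splits into three parts: a subadditivity argument for (1); an extension of the Laplace asymptotic from cubes to general admissible shapes; and a passage from $\L$ to its Palm version $\L^*$ via Campbell's formula. Setting $\phi(r) = -\log\E[\exp(-\la\L(Q_r))]$, Jensen's inequality together with $\E[\L(Q_r)]=r^d$ gives $\phi(r)\le\la r^d$. For $r > s+b$ I place $N_{r,s} = \lfloor(r+b)/(s+b)\rfloor^d$ disjoint sub-cubes of side $s$ inside $Q_r$, mutually at distance at least $b$; $b$-dependence combined with $\L\ge 0$ then yields
$$\E[\exp(-\la\L(Q_r))]\le\E[\exp(-\la\L(Q_s))]^{N_{r,s}},$$
i.e.\ $\phi(r)\ge N_{r,s}\phi(s)$. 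Dividing by $r^d$ and letting $r\to\infty$ then $s\to\infty$ produces $\lim \phi(r)/r^d = I^*\in[0,\la]$.

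For part (2), the intermediate step is to show
$$\lim_{r\to\infty} r^{-d}\log\E[\exp(-\la\L(rA))] = -|A|I^*$$
for compact $A$ with $|\partial A|=0$ and non-empty interior. The upper bound on $\E$ follows exactly as above, but with the $(s+b)\Z^d$-grid restricted to those cubes that fit inside $rA$; since $|\partial A|=0$ their count is asymptotic to $|A|r^d/(s+b)^d$, and sending $s\to\infty$ gives $\limsup\le -|A|I^*$. The matching lower bound is the main obstacle, as an outer tiling produces adjacent cubes whose masses remain correlated under $b$-dependence, so a simple product bound no longer suffices. My plan is to upgrade the subadditivity argument of (1) to a full large deviation principle via Gärtner--Ellis: the same tiling yields existence and finiteness of $\Lambda(\mu) = \lim r^{-d}\log\E[\exp(\mu\L(Q_r))]$ for every $\mu\in\R$ (the right half-line relies on the exponential-moment assumption), and $\Lambda$ is convex and steep as a limit of convex cumulant generating functions. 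Gärtner--Ellis then produces an LDP for $\L(Q_r)/r^d$ at speed $r^d$ with rate $I=\Lambda^*$, which by $b$-dependence and stationarity transfers to $\L(rA)/r^d$ at the rescaled rate $|A|I(\cdot/|A|)$; Varadhan's lemma then delivers the lower bound.

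Given this auxiliary limit, the Palm version is handled by Campbell's formula
$$\E[\exp(-\la\L^*(rA))] = \E\Big[\int_{Q_1}\L(\d x)\exp(-\la\L(rA+x))\Big].$$
For the upper bound, bound the integrand by $\L(Q_1)\sup_{x\in Q_1}\exp(-\la\L(rA+x))$ and apply Hölder with conjugate exponents $p,q>1$: the factor $\E[\L(Q_1)^p]^{1/p}$ is finite by the exponential-moment assumption and hence contributes only $e^{o(r^d)}$, while the surviving factor is handled by the auxiliary limit at intensity $\la q$ on a slight inner contraction of $rA$; sending $p\downarrow 1$ closes the bound. For the lower bound, use $b$-dependence to factor off the weight $\L(Q_1)$: since $\L$ restricted to $Q_1$ is independent of $\L$ restricted to $rA\setminus Q_{1+b}$, one obtains
$$\E[\L(Q_1)\exp(-\la\L(rA))] \ge c_{\la,b}\cdot\E[\exp(-\la\L(rA\setminus Q_{1+b}))]$$
for a strictly positive constant $c_{\la,b}$, after which the auxiliary limit applied to $rA\setminus Q_{1+b}$ (still of volume $|A|r^d + O(r^{d-1})$) delivers the matching rate.
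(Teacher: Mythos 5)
Your part (1) argument is clean and correct, and slightly more elementary than the paper's: placing disjoint sub-cubes of side $s$ at mutual distance $b$ inside $Q_r$ and using $\L\ge0$ gives $\phi(r)\ge N_{r,s}\phi(s)$ directly, whence $\liminf\ge\limsup$ after $s\to\infty$; the paper instead controls the boundary strips between adjacent sub-cubes through a Markov-inequality lemma. The upper bound on $\E[\exp(-\la\L(rA))]$ via an inner separated tiling, and the Campbell-formula reduction from $\L^*$ to $\L$, are also in line with the paper. Both proofs agree that the crux is the matching lower bound on $\E[\exp(-\la\L(rA))]$, precisely because an outer tiling produces adjacent, hence $b$-correlated, cubes.

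However, two steps in your plan contain real gaps. First, the claimed transfer from the LDP for $\L(Q_r)/r^d$ (Gärtner--Ellis) to an LDP for $\L(rA)/r^d$ ``by $b$-dependence and stationarity'' is exactly the step that needs work, and Gärtner--Ellis does not supply it: knowing the cube cumulant limit $\Lambda(\mu)$ gives an LDP only for $\L(Q_r)/r^d$, not for $\L(rA)/r^d$, and applying Gärtner--Ellis to $\L(rA)/r^d$ directly would require the very Laplace asymptotics you are trying to prove. To pass from cubes to a general $A$ you must show that the mass sitting in the $O(r^{d-1}\cdot\delta r)$-thick region $r(A_\delta\setminus A)$ and in the strips between tiles is superexponentially negligible at speed $r^d$ — this is the content of the paper's Lemma on $b$-dependent Markov bounds and its Corollary on exponential equivalence to zero. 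Varadhan adds machinery but does not remove this obligation. Second, in the Palm lower bound, the inequality $\E[\L(Q_1)\exp(-\la\L(rA))]\ge c_{\la,b}\,\E[\exp(-\la\L(rA\setminus Q_{1+b}))]$ with a strictly positive constant $c_{\la,b}$ does not hold as stated: $\L(Q_1)$ is indeed independent of $\L_{\R^d\setminus Q_{1+2b}}$, but the exponential factor in $\L(rA\setminus Q_{1+b})$ (or $Q_{1+2b}$) depends on a region touching the set where $\L(Q_1)$ lives, and the buffer strip can never be swallowed by either side of an independent splitting. The paper resolves this by working on the event $\{\L(Q_{1+2b})<\e r^d\}$, paying a factor $e^{-\la\e r^d}$ that is $o(1)$ at speed $r^d$, and controlling the complementary event with Cauchy--Schwarz plus the same $b$-dependent concentration bound; a fixed constant $c_{\la,b}$ is not available. (There is also a minor slip: the lower side of Campbell gives $\E[\L(Q_1)\exp(-\la\L(rA\oplus Q_1))]$ rather than $\L(rA)$, though this is harmless.) In short, the scaffolding is sound but the two places where you wave your hands are precisely where the paper invests its technical effort.
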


The proof of Lemma~\ref{concLenLem} reveals that the statement remains true if $\L^*$ is replaced by $\L$. 
Before proving Lemma~\ref{concLenLem}, we explain how it is used to establish Theorem~\ref{rlowLDPThm}.  For this, we need two deterministic Peierls-type results. For a locally finite set $\varphi \subset \R^d$ and $x \in \varphi$ let 
$\C_r(x) = \C_{r}(x, \varphi)$ denote the vertices in connected component of $\vG(\varphi)$ containing $x \in \varphi$. Moreover, let 
    $$\C_{r, \varepsilon}(x) = \C_{r, \varepsilon}(x, \varphi)= \{r\e z \in r\e \Z^d:\,   Q_{r\e}(r\e z)\cap  \C_r(x) \ne \es\}$$
    denote the family of sites whose associated $r\e$-cube contains a point of $\C_r(x)$. Finally, for any $\e > 0$ and $A \subset \R^d$ we let
    $$A^{\oplus \e} = \bigcup_{a \in A} Q_\e(a)$$
    denote the \emph{dilation} of $A$ by $Q_\e$.


\begin{lemma}
    \label{peierlsLem1}
    Let $\e, r  > 0$, $\varphi \subset \R^d$ be locally finite and $x \in \varphi$. Then, $\C_{r, \varepsilon}(x)$ is an $r(1 + d\e)$-connected subset of $r\e\Z^d$ and 
    $$\varphi \cap \partial_{r(1 - d\e)} \C_{r, \e}^{\oplus r \e}(x) = \es.$$
\end{lemma}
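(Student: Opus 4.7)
The plan is a direct geometric verification, split into the two assertions. The key input is the bound $r\e\sqrt{d} \le r\e d$ on the Euclidean diameter of the cube $Q_{r\e}$, which will be applied systematically on both sides.

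\emph{Chain connectedness of $\C_{r,\e}(x)$.} The idea is to lift a Gilbert-graph path to the lattice. Given $a_1, a_2 \in \C_{r,\e}(x)$, pick $y_i \in Q_{r\e}(a_i) \cap \C_r(x)$, which exist by the definition of $\C_{r,\e}$. Since $y_1, y_2$ lie in the same Gilbert cluster $\C_r(x)$, there is a path $y_1 = w_0, w_1, \ldots, w_k = y_2$ in $\varphi$ with $|w_j - w_{j+1}| \le r$. For each $j$, select $a'_j \in r\e\Z^d$ with $w_j \in Q_{r\e}(a'_j)$, taking $a'_0 = a_1$ and $a'_k = a_2$. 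Each $a'_j$ is automatically in $\C_{r,\e}(x)$, and the triangle inequality combined with the diameter bound gives
$$|a'_j - a'_{j+1}| \le r\e\sqrt{d}/2 + r + r\e\sqrt{d}/2 \le r(1 + d\e).$$
Joining consecutive sites by straight segments produces a continuous path from $a_1$ to $a_2$ lying within $r(1+d\e)/2$ of $\C_{r,\e}(x)$, which is exactly $r(1+d\e)$-connectedness.

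\emph{Absence of $\varphi$-points in the forbidden annulus.} I argue by contradiction. Suppose some $y \in \varphi$ belongs to $\partial_{r(1-d\e)} \C_{r,\e}^{\oplus r\e}(x)$. Then $y \notin \C_{r,\e}^{\oplus r\e}(x)$, which already forces $y \notin \C_r(x)$: every point of $\C_r(x)$ lies in some cube $Q_{r\e}(a)$ with $a \in \C_{r,\e}(x)$ by definition, hence in $\C_{r,\e}^{\oplus r\e}(x)$. On the other hand, $\dist(y, \C_{r,\e}^{\oplus r\e}(x)) < r(1-d\e)$ yields a point $y' \in Q_{r\e}(a)$ for some $a \in \C_{r,\e}(x)$ with $|y - y'| < r(1-d\e)$. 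Fix any $w \in Q_{r\e}(a) \cap \C_r(x)$; the diameter bound gives $|y' - w| \le r\e\sqrt{d} \le r\e d$, whence
$$|y - w| < r(1-d\e) + r\e d = r.$$
Therefore $y$ and $w$ are endpoints of an edge in $\vG(\varphi)$, so $y \in \C_r(x)$, contradicting the earlier conclusion.

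I do not anticipate any genuine obstacle, as the lemma reduces to careful accounting of cube diameters and triangle inequalities. The one design choice worth highlighting is the use of the crude bound $\sqrt{d} \le d$: this makes the same quantity $d\e$ appear both in the expansion estimate of Part~1 and in the contraction estimate of Part~2, so that the perturbation terms align to produce the symmetric constants $1 \pm d\e$ in the statement.
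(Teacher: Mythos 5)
Your proof is correct and follows essentially the same route as the paper: both parts rest on the elementary estimate that the cube $Q_{r\e}$ has Euclidean diameter $r\e\sqrt{d}\le r\e d$, combined with the triangle inequality, to lift a Gilbert-graph path to the lattice (for the $r(1+d\e)$-connectedness) and to show a nearby $\varphi$-point must already belong to $\C_r(x)$ (for the empty annulus). The paper states the same two steps in compressed form; your version merely makes the path-lifting and the contradiction explicit.
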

\begin{proof}
    The discretized cluster $\C_{r, \e}(x)$ is $r(1 + d\e)$-connected, since for any $z, z' \in \Z^d$ the existence of $y \in Q_{r\e}(r\e z)$ and $y' \in Q_{r\e}(r\e z')$ with $|y - y'| < r$ implies that $r\e |z - z'| \le r(1 + d\e)$. Moreover, for $y \in \varphi$ with $\dist(y, \C_{r, \e}^{\oplus r\e}(x)) \le r(1 - d\e)$ we have $\dist(y, \C_r(x)) \le r$, so that $y \in \C_r(x)$.
\end{proof}

Next, we let $\Ce(x)$ denote the \emph{external boundary} of the cluster $\C_r'(x) = \bigcup_{y \in \C_r(x)} B_{r/2}(y)$. That is, all points on $\partial \C_r'(x)$ connected to $\infty$ by a continuous path in $\R^d \setminus \C'_r(x)$. Moreover, for $\e > 0$, we let 
$$\Cee(x) = \{r\e z \in r\e \Z^d:\, Q_{r\e}(r\e z) \subset \C_r'(x)  \text{ and } B_{2dr\e}(r\e z) \cap \Ce(x)  \ne \es \}$$
denote the discretization of the external boundary.
\begin{lemma}
    \label{peierlsLem2}
    Let $r > 0$, $\e \in (0, 1/(2d))$ and $\varphi \subset \R^d$ be locally finite. Then, $\Cee(x)$ is $5dr\e$-connected.
\end{lemma}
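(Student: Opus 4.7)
The plan is to prove $5dr\e$-connectedness of $\Cee(x)$ by reducing to path-connectedness of the continuous external boundary $\Ce(x)$ and then discretizing a continuous path on $\Ce(x)$ via a controlled ``push-inward'' map $\phi:\Ce(x)\too\Cee(x)$. Throughout I work in the case that $\C_r(x)$ is finite, which is the relevant case for the Peierls application in the proof of Theorem~\ref{rlowLDPThm}; then $\C_r'(x)$ is a compact connected finite union of open balls of radius $r/2$, the unbounded component $U_\infty$ of $\R^d\setminus\C_r'(x)$ is open and connected, and its topological boundary $\Ce(x)=\partial U_\infty$ is path-connected.

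Given two sites $r\e z_1,r\e z_2\in\Cee(x)$, I pick boundary points $p_1,p_2\in\Ce(x)$ with $|r\e z_i-p_i|<2dr\e$, fix a continuous path $\gamma:[0,1]\too\Ce(x)$ from $p_1$ to $p_2$, and refine it to a sequence $p_1=q_0,q_1,\ldots,q_m=p_2$ of points on $\Ce(x)$ with consecutive distances at most $r\e$. To define $\phi$: for each $q_i$ pick any $y_i\in\C_r(x)$ with $|q_i-y_i|=r/2$ (possible because $q_i\in\partial\C_r'(x)$ lies on the sphere of at least one constituent ball), push $q_i$ inward toward $y_i$ by a distance $\delta$ slightly larger than $r\e\sqrt{d}$ to obtain $p_i'$, and let $\phi(q_i)\in r\e\Z^d$ be a nearest lattice point to $p_i'$. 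Using $\e<1/(2d)$, a routine calculation shows $Q_{r\e}(\phi(q_i))\subset B_{r/2}(y_i)\subset\C_r'(x)$, while $|\phi(q_i)-q_i|\le (3/2)r\e\sqrt{d}+O(r\e)<2dr\e$ shows $q_i\in\Ce(x)\cap B_{2dr\e}(\phi(q_i))$, so $\phi(q_i)\in\Cee(x)$.

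Chaining the images, the sequence $r\e z_1,\phi(q_0),\phi(q_1),\ldots,\phi(q_m),r\e z_2$ is a path in $\Cee(x)$ whose consecutive jumps have size at most $2dr\e+|\phi(q_j)-q_j|$ at the endpoints and at most $r\e+2\max_j|\phi(q_j)-q_j|$ in between; both estimates are bounded by $5dr\e$ for $d\ge 2$ via the elementary inequalities $3\sqrt{d}\le 3d$ and $1\le 2d$. The most delicate step of the plan, and the one I expect to require the most care to articulate, is the topological input that $\Ce(x)$ is path-connected for compact connected finite unions of balls in $\R^d$: this is immediate in $d=2$ from Jordan-curve-type results, while in higher dimensions it should follow by combining connectedness of the boundary of the unbounded complementary component of a compact connected set with local path-connectedness of the piecewise-spherical boundary.
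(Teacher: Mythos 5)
Your argument is essentially the paper's: both discretize a continuous path on $\Ce(x)$ and then push boundary points inward toward the center of a covering ball $B_{r/2}(y_i)$ before snapping to the lattice $r\e\Z^d$, with only the bookkeeping differing (the paper inducts along the path in arclength steps of $dr\e$ and pushes inward by exactly $dr\e$, you refine the path into segments of length $\le r\e$ and push by $\delta$ slightly exceeding $r\e\sqrt d$). The one genuine input you flag, that $\Ce(x)$ admits a rectifiable path between the two anchor points, is used silently by the paper as well (which even writes the path as lying on $\partial\C'_r(x)$ while tacitly needing the intermediate points to land back on $\Ce(x)$), so your proposal is, if anything, more explicit about where the real work lies.
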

\begin{proof}
    Let $r\e z, r\e z' \in \Cee(x)$ be arbitrary and choose $y, y' \in \Ce(x)$ such that $\max\{|y - r\e z|, |y' - r\e z'|\} \le 2d r\e$. We prove the claim by induction on $\lfloor \ell/(dr\e)\rfloor$, where $\ell$ is the length of the shortest connecting continuous path $\pi$ between $y$ and $y'$ on $\partial \C_r'(x)$. Now, choose $y''$ on $\pi$ such that the length of the connecting path between $y$ and $y''$ is given by $dr\e$. Furthermore, we assert that there exists $z''\in\Z^d$ such that $r\e z'' \in \Cee(x)$ and that $|y'' - r\e z''| \le 2dr\e$. Indeed, choose $x'\in\varphi$ and $p\in [x',y'']$ such that $y''\in\partial B_{r/2}(x')$ and $|p-y''|=dr\e$. Then, any $z''\in\Z^d$ satisfying $p\in Q_{r\e}(r\e z'')$ has the desired properties. This implies that, 
    $$|r\e z'' - r\e z| \le  |r\e z'' - y''| + dr\e + |r\e z - y| \le 5dr\e,$$
    so that using the induction hypothesis on $z''$ and $z'$ concludes the proof.
\end{proof}

\begin{proof}[Proof of Theorem~\ref{rlowLDPThm}, upper bound for fixed $K$]
We apply Lemma~\ref{peierlsLem1} with $\varphi = r^{-1}\vXls$ and $r = 1$ to obtain that
\begin{align*}
1-\theta_{Kr}(\la, r)\le \sum\P(r^{-1}X^{*, \la} \cap \partial_{1 - d \e} A^{\oplus \e} = \es),
\end{align*}
where the sum is over all $(1 + d \e)$-connected subsets $A$ of $Q_K\cap\e\Z^d$. 
Since this sum is finite, it suffices to consider an arbitrary $A$. 
Since $\vXls$ is a Cox point process with intensity measure $\la\L^*$, Lemma~\ref{concLenLem} reduces the proof to the assertion that the volume of $\partial_{1 - d\e}A^{\oplus \e}$ is at least $\kappa_d (1 - 2d\e)^d$.
         
    Let $H(A)$ denote the convex hull of $A^{\oplus \e}$ and $V(A)$ set of all points outside $H(A)$ having distance at most $(1 - 2d\e)$ from a vertex of $H(A)$. Then, $V(A) \subset \partial_{1 - d\e}A^{\oplus \e}$. Now, the Steiner formula from convex geometry (see~\cite[Theorem 1.1]{steiner} and the proof) implies that $|V(A)| \ge \kappa_d (1 - 2d\e)^d$.
\end{proof}

\begin{proof}[Proof of Theorem~\ref{rlowLDPThm}, upper bound $K \to \infty$]
    By Palm calculus, it suffices to prove an upper bound for the probability that there exists a finite connected component of the Gilbert graph containing a point $x$ in $Q_1$ and percolating beyond $Q_{(K-1)r}$. Now, we apply Lemma~\ref{peierlsLem2} with $\varphi = r^{-1} \vXl$, $r = 1$ and $\e = 1/(5d)$. In particular, the discretized external boundary $\C = \Cee(x)$ forms a $1$-connected set not intersecting $r^{-1}\vXl$.  We claim that every site in $\C$ has $d_\infty$-distance to the origin at most $\#\C$.

    Indeed, assume that $(5d)^{-1} z \in \C$ was a site of $d_\infty$-distance $d_0 \ge 1$ from the origin. Then, there exists a half-space containing the origin and such that every point in that half-space is at least of $d_\infty$-distance $d_0$ to $(5d)^{-1}z$. This half-space hits the external boundary, so that we can choose    another site $(5d)^{-1} z' \in \C$ that is of distance at least $d_0$ from $(5d)^{-1} z$. Then $d_0 \ge \#\C$ since $\C$ is $1$-connected. In particular, writing $\mc{A}_k$ for the family of $1$-connected $k$-element subsets in $(5d)^{-1}\Z^d$ having at most distance $k$ to the origin, we arrive at 
    \begin{align}
        \label{kInfEq}
        \theta_{Kr}(\la, r) - \theta(\la, r) \le \sum_{k \ge (K - 1)/2} \sum_{A \in \mc{A}_k} \P(r^{-1}\vXl \cap A^{\oplus (5d)^{-1}} = \es).
    \end{align}
    Moreover, each $A \in \mc{A}_k$  decomposes into $2^d$ subsets such that at least one of them consists of $\lceil 2^{-d}k\rceil$ or more non-adjacent squares. Hence, by $br^{-1}$-dependence and stationarity of $r^{-1}\vXl$, for large $r$,
    $$\P(r^{-1}\vXl \cap A^{\oplus (5d)^{-1}} = \es) \le \P(r^{-1}\vXl \cap Q_{(5d)^{-1}} = \es)^{2^{-d}k}$$
    
    Note that $\#\mc{A}_k \le (4k)^da^k$ for a suitable $a > 0$~\cite[Lemma 9.3]{penrose}, so that
\begin{align*}
    \theta_{Kr}(\la, r) - \theta(\la, r) &\le \sum_{k \ge (K - 1)/2} (4k)^da^k \P(r^{-1}\vXl \cap Q_{(5d)^{-1}} = \es)^{2^{-d}k} \\
        &\le (2a\P(r^{-1}\vXl \cap Q_{(5d)^{-1}}(o) = \es)^{2^{-d}})^{(K - 1)/2}
    \end{align*}
    for sufficiently large $K$. By Lemma~\ref{concLenLem}, the last expression decays to zero at an arbitrary high exponential rate in $r^d$ if $K$ is chosen sufficiently large.
	\end{proof}

Now, we prove Lemma~\ref{concLenLem} relying on the LDP for near-additive functionals established in~\cite[Theorem 2.1]{SeYu01}. Since near-additivity requires the existence of a specific coupling that is not immediate in our setting, we reprove a variant of~\cite[Theorem 2.1]{SeYu01} tailored to our needs. To begin with, we observe that the assumption on $b$-dependence provides strong exponential approximation properties.
\begin{lemma}
    \label{markovLem}
    Assume that $\L$ is $b$-dependent and $\L(Q_1)$ has all exponential moments.
Let $A \subset \R^d$ be compact, $\a>0$ and put $s_\alpha = \log\E[\exp(\alpha\L(Q_b))]$ and $n = \#\{z \in \Z^d:\, Q_b(bz) \cap A \ne \es\}$.
    Then,
    $$\log \P(\L(A) \ge L) \le ns_\alpha - 2^{-d} \alpha L + d \log 2.$$
\end{lemma}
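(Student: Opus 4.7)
The plan is a standard Chernoff--with--independent-thinning argument. The point is to split the cubes intersecting $A$ into $2^d$ classes within each of which $b$-dependence delivers independence, and then apply an exponential Markov bound class by class.

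First, I would set $\mc Z = \{z \in \Z^d:\, Q_b(bz) \cap A \ne \es\}$, so $\#\mc Z = n$, and partition $\mc Z$ according to parity: for each $v \in \{0,1\}^d$, let $\mc Z_v = \{z \in \mc Z:\, z \equiv v \pmod{2}\}$. For distinct $z, z' \in \mc Z_v$ one has $\|bz - bz'\|_\infty \ge 2b$, hence $\mathrm{dist}(Q_b(bz), Q_b(bz')) \ge b$, so by $b$-dependence the family $\{\L(Q_b(bz))\}_{z \in \mc Z_v}$ is independent (a negligible strict-inequality boundary issue is absorbed by enlarging $b$ by an arbitrarily small amount; I would just state this at the outset). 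By stationarity each $\L(Q_b(bz))$ is distributed as $\L(Q_b)$, and the exponential moment assumption makes $s_\alpha$ finite.

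Next, since $A \subset \bigcup_{z \in \mc Z} Q_b(bz)$, I would write
\begin{equation*}
\L(A) \;\le\; \sum_{z \in \mc Z} \L(Q_b(bz)) \;=\; \sum_{v \in \{0,1\}^d} \sum_{z \in \mc Z_v} \L(Q_b(bz)).
\end{equation*}
Hence $\{\L(A) \ge L\}$ implies that $\sum_{z \in \mc Z_v} \L(Q_b(bz)) \ge 2^{-d} L$ for at least one $v$. The union bound followed by the exponential Chebyshev inequality, together with independence within each $\mc Z_v$, yields
\begin{equation*}
\P(\L(A) \ge L) \;\le\; \sum_{v \in \{0,1\}^d} e^{-2^{-d}\alpha L} \prod_{z \in \mc Z_v} \E\bigl[e^{\alpha \L(Q_b(bz))}\bigr] \;=\; \sum_{v} e^{-2^{-d}\alpha L + \#\mc Z_v \cdot s_\alpha}.
\end{equation*}
Using $\#\mc Z_v \le n$ and bounding the sum over $v$ by $2^d$ gives $\P(\L(A) \ge L) \le 2^d \exp(n s_\alpha - 2^{-d}\alpha L)$, and taking logarithms yields the claim.

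There is no real obstacle here; the only subtlety is the boundary case $\mathrm{dist} = b$ versus $> b$ in the definition of $b$-dependence, which is handled by a harmless infinitesimal enlargement of $b$. Everything else is Chernoff plus a parity partition.
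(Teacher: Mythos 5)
Your argument is correct and is essentially the paper's own proof: both partition the lattice sites indexing cubes that meet $A$ into $2^d$ parity classes, invoke $b$-dependence for joint independence within each class, and close with a union bound plus exponential Markov. The only minor delta is that you flag the $\dist \ge b$ versus $\dist > b$ boundary issue explicitly; the paper silently ignores it, so your remark is a small but sensible refinement rather than a departure.
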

\begin{proof}
    As in the proof of Theorem~\ref{rlowLDPThm}, we partition the set 
    $$\{z \in \Z^d:\, Q_b(bz) \cap A \ne \es\}$$
    into $2^d$ subsets $\{B_i\}_{1 \le i \le 2^d}$ of non-adjacent sites. In particular, by Markov's inequality 
    $$\P(\L(A) \ge L) \le \sum_{i = 1}^{2^d} \P(\L(B_i \oplus Q_b) \ge L2^{-d}) \le \sum_{i = 1}^{2^d}\exp( \#B_i s_\alpha - \alpha L2^{-d}) \le 2^d\exp(ns_\alpha - \alpha L2^{-d}),$$
    as required.
\end{proof}

In particular, we deduce that contributions of surface order can be neglected for our considerations.  Then, applying Lemma~\ref{markovLem} with $L = \e r^d$, $\alpha = \e^{-2}$, and $n = o(r^d)$ gives the following result, where for any $A, B \subset \R^d$ we let
    $$A \oplus B = \bigcup_{a \in A} a + B\quad\text{and}\quad A \ominus B = \{a \in A:\, a+B\subset A\}$$
    denote the \emph{dilation}, respectively \emph{erosion}, of $A$ by $B$.
\begin{corollary}
    \label{expApproxLem}
    Let $s > 0$ and $A \subset \R^d$ is a compact set with $|\partial A| = 0$. If  $\L$ is $b$-dependent, then, as $r \to \infty$, the random variable $\L(r\partial A \oplus Q_s)$ is exponentially equivalent to zero.
\end{corollary}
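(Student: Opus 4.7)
The strategy is to apply Lemma~\ref{markovLem} directly, exploiting the fact that the dilated boundary $r\partial A \oplus Q_s$ has volume of order $o(r^d)$. Consequently, the number $n$ of lattice $b$-cubes meeting this set is also $o(r^d)$. The exponential moment hypothesis makes the cumulant $s_\alpha$ finite for every $\alpha > 0$, and the asymmetry between $n = o(r^d)$ and a deviation threshold of order $r^d$ drives the resulting Markov-type bound to $-\infty$ on the $r^{-d}$ scale.

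The first step is the volume estimate. Writing $r\partial A \oplus Q_s = r(\partial A \oplus Q_{s/r})$ gives $|r\partial A \oplus Q_s| = r^d\,|\partial A \oplus Q_{s/r}|$. Compactness of $\partial A$ (inherited from $A$) together with $\bigcap_{t > 0}(\partial A \oplus Q_t) = \partial A$ and monotone convergence of Lebesgue measure yields $|\partial A \oplus Q_{s/r}| \tod 0$ as $r \uparrow \infty$, using the assumption $|\partial A| = 0$. The lattice count
\[
n = \#\{z \in \Z^d :\, Q_b(bz) \cap (r\partial A \oplus Q_s) \ne \es\} \le b^{-d}\,\bigl|r\partial A \oplus Q_{s + b\sqrt{d}}\bigr|
\]
then satisfies $n/r^d \to 0$ by the same reasoning.

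Exponential equivalence to zero on the $r^d$-scale amounts to the assertion that for every $\delta > 0$,
\[
\limsup_{r \uparrow \infty} r^{-d}\log\P\bigl(\L(r\partial A \oplus Q_s) \ge \delta r^d\bigr) = -\infty.
\]
Applying Lemma~\ref{markovLem} with $L = \delta r^d$ and a free parameter $\alpha > 0$ yields
\[
r^{-d}\log\P\bigl(\L(r\partial A \oplus Q_s) \ge \delta r^d\bigr) \le (n/r^d)\,s_\alpha - 2^{-d}\alpha\delta + d\,r^{-d}\log 2.
\]
Since $s_\alpha < \infty$ by the exponential moment hypothesis, letting $r \uparrow \infty$ with $\alpha$ held fixed eliminates the first and third summands and gives $\limsup \le -2^{-d}\alpha\delta$; letting $\alpha \uparrow \infty$ afterwards then produces the desired limit $-\infty$. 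The only delicate point is the volume estimate, which rests on compactness of $\partial A$ to justify the monotone convergence; no finer geometric regularity beyond $|\partial A| = 0$ is required.
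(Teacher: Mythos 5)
Your proof is correct and follows the same route the paper sketches: invoke Lemma~\ref{markovLem} with $L$ of order $r^d$, observe that $n = o(r^d)$ because $|r\partial A \oplus Q_s| = r^d\,|\partial A \oplus Q_{s/r}| = o(r^d)$ under $|\partial A| = 0$, and then let the free parameter $\alpha$ tend to infinity to kill the rate. You merely spell out the volume and lattice-count estimates that the paper leaves implicit, and you correctly note that the exponential moment hypothesis (in force throughout this section) is what makes $s_\alpha$ finite.
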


Equipped with these ancillary results, we now proceed with the proof of Lemma~\ref{concLenLem}. 

\begin{proof}[Proof of Lemma~\ref{concLenLem}]
    We proceed along the lines of~\cite[Theorem 2.1]{SeYu01}. First, choosing $n=n(r) = \lfloor r \rfloor$, 
    $$\E[\exp(-\la \L(Q_{n + 1}))] \le \E[\exp(-\la \L(Q_r))] \le \E[\exp(-\la \L(Q_{n}))].$$
    Hence, it suffices to show that 
    $$\liminf_{n \to \infty}n^{-d}\log\E[\exp(-\la \L(Q_n))] = -I^*,$$
    where
    $$I^* =- \limsup_{n \to \infty} n^{-d}\log\E[\exp(-\la \L(Q_n))].$$  
    Let $\e > 0$ be arbitrary and choose $k_1 \ge 1$ such that 
    $$k_1^{-d}\log \E[\exp(-\la \L(Q_{k_1}))] \ge -\e - I^*.$$
    Next, we assert that
    \begin{align}
        \label{yukAssertEq}
        \liminf_{m \to \infty} \frac{1}{(k_1 m)^d}\log \E[\exp(-\la \L(Q_{k_1m}))] \ge -2\e -  I^*.
    \end{align}
    Indeed, subdivide $Q_{k_1m}$ into $m^d$ congruent sub-cubes $Q^{(1)}, \ldots, Q^{(m^d)}$ of side length $k_1$. Then, assuming $b \ge 1$,
    the number of cubes $Q_b(bz)$ intersecting $Q^{(i)} \setminus(Q^{(i)} \ominus Q_b)$ is at most $2^dk_1^{d-1}$. 
    In particular, by Lemma~\ref{markovLem}, 
    $$\P\Big(\sum_{1\le i \le m^d}\L(Q^{(i)} \setminus(Q^{(i)} \ominus Q_b)) > \e \la^{-1} (k_1 m)^d\Big) \le \exp(-2(k_1m)^dI^*)$$
    if $k_1$ is large enough such that 
    $$2^dk_1^{d-1}s_{4\cdot 2^dI^*\la/\e}  - 4I^*k_1^d\le -2I^*k_1^d.$$
    Moreover, by $b$-dependence, 
    \begin{align*}
       \log\E\Big[\exp\Big(- \la \sum_{1 \le i \le m^d}\L(Q^{(i)} \ominus Q_b)\Big)\Big]&=m^d\log \E[\exp(- \la \L(Q^{(1)} \ominus Q_b))]\\
       &\ge m^d\log \E[\exp(- \la \L(Q_{k_1}))]\\
       &\ge k_1^dm^d(-I^*-\e).
    \end{align*}
    In particular, considering
    \begin{align*}
        \E[\exp(-\la \L(Q_{k_1m}))] &= \E\Big[\exp\Big(- \la \sum_{1\le i \le m^d}\L(Q^{(i)} \setminus(Q^{(i)} \ominus Q_b)) - \la \sum_{1 \le i \le m^d}\L(Q^{(i)} \ominus Q_b)\Big)\Big]\\
        &\ge 
        \E\Big[\exp\Big(- (k_1m)^d\e - \la \sum_{1 \le i \le m^d}\L(Q^{(i)} \ominus Q_b)\Big)\Big]\\
        &\phantom{=} - \P\Big(\sum_{1\le i \le m^d}\L(Q^{(i)} \setminus(Q^{(i)} \ominus Q_b)) > \e \la^{-1}(k_1 m)^d\Big),
    \end{align*}
    we see that assertion~\eqref{yukAssertEq} follows.

\medskip
    Now, let $A \subset \R^d$ be as in the claim of part (2). Then, by definition of the Palm distribution,
    $$\E[\L(Q_1)\exp(-\la \L(rA \oplus Q_1))]\le \E[\exp(-\la \L^*(rA))] \le \E[\L(Q_1)\exp(-\la \L(rA \ominus Q_1))].$$
Using H\"older's inequality, the right-hand side can be further estimated by 
$$
\E[\L(Q_1)^{(1+\e)/\e}]^{\e/(1+\e)}\E[\exp(-\la\L(rA \ominus Q_1))]^{1/(1+\e)}$$
for some arbitrarily small $\e>0$ where the first factor does not depend on $r$. For the left-hand side, we can further estimate using $b$-dependence
    \begin{align*}
&\E[\L(Q_1)\exp(-\la( \L(rA \oplus Q_1)\sm Q_{1+2b})-\la\e r^d)\one\{\L(Q_{1+2b})< \e r^d\}]\\
&\ge\E[\L(Q_1)\exp(-\la( \L(rA \oplus Q_1)\sm Q_{1+2b})-\la\e r^d)]-\E[\L(Q_1)\one\{\L(Q_{1+2b})\ge \e r^d\}]\\
&\ge\E[\exp(-\la( \L(rA \oplus Q_1))-\la\e r^d)]\E[\L(Q_1)]-\E[\L(Q_1)^2]^{1/2}\P[\L(Q_{1+2b})\ge \e r^d]^{1/2}.
\end{align*}   
The very last factor converges to zero exponentially fast with arbitrary rate by    Lemma~\ref{markovLem}.
    Further, note that $((rA\oplus Q_1)\sm rA) \cup (rA\sm(rA\ominus Q_1)) \subset r\partial A\oplus Q_1$. Hence, by Corollary~\ref{expApproxLem} it suffices to prove the claim of part (2) with $\E[\exp(-\la \L^*(rA))]$ replaced by $\E[\exp(-\la \L(rA))]$. For $\delta > 0$ let 
    $$A_\delta = \bigcup_{z \in \Z^d:\, Q_\delta (\delta z) \cap A \ne \es} Q_\delta (\delta z)$$
    denote the union of all $\delta$-cubes intersecting $A$. For $r\de > b$, Corollary~\ref{expApproxLem} and $b$-dependence yield 
    \begin{align*}
        \liminf_{r \to \infty}r^{-d}\log \E[\exp(-\la \L(rA))]  &\ge \liminf_{r \to \infty}r^{-d}\log \E[\exp(-\la \L(rA_\delta))]\\
        &\ge |A_\delta|\delta^{-d} \liminf_{r \to \infty}r^{-d}\log \E[\exp(-\la \L(Q_{r\delta}))]\\
        &= -|A|I^* - |A_\delta \setminus A| I^*.
    \end{align*}
    In particular, since $|\partial A| = 0$, sending $\delta \tod 0$ gives that 
    $$\liminf_{r \to \infty}r^{-d}\log \E[\exp(-\la \L(rA))] \ge -|A|I^*.$$
    Noting that a  similar argument shows that 
    $$\limsup_{r \to \infty}r^{-d}\log \E[\exp(-\la \L(rA))] \le -|A|I^*,$$
    concludes the proof.
\end{proof}

\subsection{Proof of Theorem~\ref{lalowLDPThm}}
Again, as explained in Section~\ref{hdSec}, we only need to prove the upper bound. To achieve this goal, we proceed as in the proof of Theorem~\ref{rlowLDPThm}. Let $A$ be an arbitrary $(r + dr\varepsilon)$-connected subset of $ Q_{Kr} \cap r\varepsilon\Z^d$, then
\begin{align*}
\limsup_{\la \to \infty}\la^{-1} \log \P(X^{*, \la} \cap \partial_{r(1 - d \e)} A^{\oplus r \e} = \es)&=\limsup_{\la \to \infty}\la^{-1} \log \E[\exp(-\la\L^*(\partial_{r(1 - d \e)} A^{\oplus r \e}))]\\
& \le -\essinf\Lambda^*(\partial_{r(1 - d \e)} A^{\oplus r\e}).
\end{align*}
Since the number of possible choices for $A$ is finite, we conclude as in Section~\ref{CoxPercProof}, that
$$\limsup_{\la\uparrow\infty}\la^{-1}\log(1-\theta_{Kr}(\la, r))\le -\lim_{\e \tod 0}\inf_{A \in \mc{R}_{r + \e}}\essinf \L^*(\partial_{r - \varepsilon} A).$$

It remains to bound $\theta_{Kr}(\lambda, r) - \theta(\lambda, r)$. Again, we proceed as in the proof of Theorem~\ref{rlowLDPThm} to arrive at 
$$\theta_{Kr}(\lambda, r) - \theta(\lambda, r) \le (2a\P(\vXl \cap Q_{r(5d)^{-1}} = \es)^{2^{-d}})^K =  (2a\E[\exp(-\la \L(Q_{r(5d)^{-1}}))]^{2^{-d}})^K$$
for sufficiently large $K$.
Since, by assumption, $\essinf \L(Q_{r(5d)^{-1}}) > 0$, the last expression again decays to zero at an arbitrary high exponential rate in $\la$ if $K$ is chosen sufficiently large.

\section{Proofs of Theorems~\ref{larThm},~\ref{rlaAbsThm} and~\ref{rlaSingThm}}
\label{limitSec2}
To begin with, we prove the meta results announced in Section~\ref{Meta}.
\begin{proof}[Proof of Proposition~\ref{metaProp1}]
        First, for any $K_0 > 0$,
        $$\limsup_{r \too r_\infty} \lim_{K \to \infty} \theta_{K s(r)}(\la(r), r) \le \lim_{r \too r_\infty} \theta_{K_0 s(r)}(\la(r), r) = a_{K_0},$$
    so that
    $$\limsup_{r \too r_\infty}\theta(\la(r), r) =\limsup_{r \too r_\infty} \lim_{K \to \infty} \theta_{K s(r)}(\la(r), r) \le \lim_{K \to \infty}a_K$$
    as required.
\end{proof}
\begin{proof}[Proof of Proposition~\ref{metaProp2}]
        Using Proposition~\ref{metaProp1}, it remains to show that
        $\liminf_{r \too r_\infty} \theta(\la(r), r) \ge \lim_{K \to \infty} a_K.$
        Note that
        $$\theta_{K s(r)}(\la(r), r) - \theta(\la(r), r) = \P(\infty \not\leftrightsquigarrow o \leftrightsquigarrow \partial Q_{Ks(r)}),$$
        so that
        \begin{align*} 
            \liminf_{r \too r_\infty} \theta(\la(r), r) &= \lim_{K \to \infty}a_{K} - \limsup_{K \to \infty}\limsup_{r \too r_\infty}\P(\infty \not\leftrightsquigarrow o \leftrightsquigarrow \partial Q_{Ks(r)}).
        \end{align*}
 Here the probability is understood to be formed under the Palm version of the Cox point process. By definition of the Palm version, we can bound this probability from above by the probability under the original Cox point process for the event
        $$E_{K, r} = \{Q_{1} \not \leftrightsquigarrow \infty\} \cap \{Q_1 \leftrightsquigarrow \partial Q_{(K-1)s(r)}\}.$$

    To construct a renormalized percolation process, a cube $Q_{s(r)}(s(r)z)$ is \emph{$r$-good} if
        \begin{enumerate}
            \item $R(Q_{5s(r)}(s(r)z)) < s(r)$, and
            \item $\vG(\vXl)\cap Q_{5s(r)}(s(r)z)$ contains a unique connected component intersecting both $\partial Q_{3s(r)}$ and $\partial Q_{5s(r)}$, and this component also intersects $Q_{s(r)}$.
        \end{enumerate}
    With a suitable choice of $q_d$, we conclude from~\cite[Theorem 0.0]{domProd} that the set of $r$-good sites is stochastically dominated from below by a Bernoulli percolation process with the following property: there exists an almost surely finite $K_0$ such that inside $Q_{K_0}$ there exists an interface of open sites separating $o$ from $\infty$ and that is contained in the infinite connected component. Note that, by condition (2), this infinite connected component also contains a unique infinite component in $\vG(\vXl)$. 
    Thus, under the event $E_{K, r}$ the infinite connected component of $r$-good sites does not contain an interface separating $o$ from $\infty$ in $Q_{K - 2}$.
      Hence,
        $$\limsup_{K \to \infty}\limsup_{r \too r_\infty}\P(E_{K,r}) \le \limsup_{K \to \infty}\P(K_0 \ge K - 2) = 0,$$
        as required.
\end{proof}

\subsection{Proof of Theorem~\ref{larThm}}
        We apply Propositions~\ref{metaProp1} and~\ref{metaProp2} with $a_K = \bar\theta_K(\r)$, $r_\infty = \infty$, $\la(r) = \r r^{-d}$ and $s(r) = nr$ for a suitably chosen $n \ge 1$.

\begin{proof}[Proof of Theorem~\ref{larThm}, convergence in bounded domains]
    We verify the condition of Proposition~\ref{metaProp1}. First, the connectivity properties of the Gilbert graph $\vG(\vXls)$ can be expressed equivalently in terms of the Poisson-Boolean model $\vXls \oplus B_{r/2}(o)$. Since the operation $A\mapsto A\oplus B_{r/2}$ is continuous in the space of compact sets~\cite[Theorem 12.3.5]{sWeil}, we conclude from a classical result from point process theory~\cite[Theorem 11.3.III]{daleyPPII2009} that the restriction of $\vXls \oplus B_{r/2}(o)$ to a bounded sampling window converges weakly to the corresponding restriction of a Poisson-Boolean model based on a Poisson point process with intensity $\r$. Since the indicator function that the origin is connected to the boundary of the box has discontinuities of measure 0 with respect to the Poisson-Boolean model, this yields convergence in bounded domains.
\end{proof}

\begin{proof}[Proof of Theorem~\ref{larThm}, tightness]
    By stabilization, the first condition in~\eqref{metaCond} is satisfied for sufficiently large $n \ge 1$ uniformly over all $r \ge 1$. Next, by~\cite[Theorem 2]{uniqFin}, if $\vXl$ is replaced by the limiting Poisson point process, then the second condition in~\eqref{metaCond} is satisfied for sufficiently large $n \ge 1$. This is true uniformly for all $r \ge 1$ by the scale invariance of the Poisson point processes. Finally, after fixing $n \ge 1$, weak convergence in finite domains implies that if $r$ is sufficiently large, then the second condition in~\eqref{metaCond} is satisfied also for $\vXl$.
\end{proof}

\subsection{Proof of Theorem~\ref{rlaAbsThm}}
We apply Propositions~\ref{metaProp1} and~\ref{metaProp2} with 
$$a_K = \E\big[\bar\theta(\r \ell_o^*)\one\{o\leftrightsquigarrow_\L\partial Q_K\}\big],$$
$r_\infty = 0$, $\la(r) = \r r^{-d}$ and $s(r) = n$  for a suitably chosen $n \ge 1$.  Recall that we assumed the intersection of any connected component of $L_{\ge}$ with $L_>$ to remain connected. 

\begin{proof}[Proof of Theorem~\ref{rlaAbsThm}, convergence in bounded domains]
    We show the condition for Proposition~\ref{metaProp1}. The proof of convergence in bounded domains proceeds along the lines of~\cite[Lemma 2.2]{adhoc}. We show that this convergence is true almost surely conditioned on the random field $\ell^*$. 
    For the convenience of the reader we reproduce the most important steps where we simply write $K$ instead of $K s(r)$ since $s(r)=n$. 

    Abusing notation, we consider $\theta_K(\la(r), r) = \P(o\leftrightsquigarrow_\L\partial Q_K | \L^*)$ as conditional probability. First, assume that $o\not\leftrightsquigarrow_\L\partial Q_K$. Then, in $Q_K$ by upper semicontinuity, the connected component $\mc{C}$ of $o$ in $L_\ge$ is a compact set of positive distance $\eta$ from $L_\ge \setminus \mc{C}$. In particular, writing
    $$\g= \{x \in Q_K:\, d_\infty(\mc{C}, x) = \eta/2 \},$$
    and $\eta' = \eta/4$, we see that $L_{\ge}$ does not intersect the discretized interface 
    $$\g^+ = \bigcup_{\substack{z \in \Z^d \\ Q_{\eta'}({\eta'} z) \cap \g \ne \es}} Q_{{3\eta'}}({\eta'} z).$$
    If the origin percolates beyond $\partial Q_K$ in $\vG(\vXls)$, then for sufficiently small $r$, the connected component crosses the interface.
    Hence, at least one of those cubes contains a connected set of $\vG(\vXls)$ of diameter at least $\eta$. By~\cite[Theorem 2]{uniqFin}, the probability of this event decays to 0 as $r \tod 0$ since in the cube there is a subcritical intensity.

    Hence, we may assume that $o\leftrightsquigarrow_\L\partial Q_K$. If we can show that for every $\e > 0$,
    $$\limsup_{r \tod 0} \theta_K(\la(r), r) \le \bar\theta(\r \ell_o^* + \e)$$
    and
    $$\liminf_{r \tod 0} \theta_K(\la(r), r) \ge \bar\theta(\r \ell_o^* - \e),$$
    then the result follows from assumption (1) and continuity of $\bar\theta$ away from $\bar\la_c$.     

    For the upper bound, we use assumption (2) and choose $\eta > 0$ sufficiently small such that $\sup_{x:\, |x|_\infty \le \eta} |\ell^*_x - \ell^*_o|\le\r^{-1} \e$. Then, by rescaling, 
    $$\limsup_{r \tod 0} \theta_K(\la(r), r) \le \limsup_{r \tod 0} \theta_\eta(\la(r), r)  \le \limsup_{r \tod 0}\bar\theta_{\eta /r}(\r \ell_o^* + \e) =\bar\theta(\r \ell_o^* + \e) .$$

    It remains to prove the lower bound. By assumption (3), there exists $\eta > 0$ and a path $\g$ connecting $o$ to $\partial Q_K$ in $L_>$ such that $\g_+ \subset L_>$, where 
    $$\g_{}^+ = \bigcup_{\substack{z \in \Z^d \\ Q_\eta(\eta z) \cap \g \ne \es}} Q_{3\eta}({\eta} z)$$
    denotes the discretization of $\g$. 
        Again, by~\cite[Theorem 2]{uniqFin} and stochastic domination, with high probability as $r\tod 0$, inside $\g^+$ there exists a unique connected component of $\vG(\vXls)$ with diameter at least $\eta/2$. Hence, 
        $$\liminf_{r \tod 0}\theta_K(\la(r), r) \ge \liminf_{r \tod 0}\theta_\eta(\la(r), r) \ge  \liminf_{r \tod 0}\bar\theta_{\eta /r}(\r \ell_o^* - \e)\ge \bar\theta(\r \ell_o^* - \e),$$
        as asserted.
\end{proof}

Recall that we additionally assumed that $\L$ is stabilizing and with a probability tending to 1 in $n$, the set 
    $L_> \cap Q_{5n} \setminus Q_{3n}$
    contains a compact interface separating $\partial Q_{3n}$ from $\infty$.

\begin{proof}[Proof of Theorem~\ref{rlaAbsThm}, tightness]
     By stabilization, the first condition in~\eqref{metaCond} is satisfied for all large $n \ge 1$. Moreover, by assumption, with high probability as $n \to \infty$, there exists a compact set 
     $$\g \subset \{x \in \R^d:\, \ell_x > \bar\la_c/\r\} \cap Q_{5n} \setminus Q_{3n}$$ 
     separating $\partial Q_{3n}$ from $\infty$. In particular, for $\eta$ sufficiently small also the discretized interface
    $$\g_{}^+ = \bigcup_{\substack{z \in \Z^d \\ Q_\eta(\eta z) \cap \g \ne \es}} Q_{3\eta}({\eta} z)$$
    satisfies $\inf_{x \in \g_+} \ell_x> \bar\la_c/\r$. Invoking~\cite[Theorem 2]{uniqFin} and stochastic domination once more, as $r \tod 0$, with high probability, inside $\g^+$ there exists a unique connected component of $\vG(\vXl)$ with diameter at least $\eta/2$. We write $E_r'(o)$ for this event, and more generally we write $E_r'(x)$ for the event that $E_r'(o)$ occurs for $\vXl-nx$. We obtain that 
    $$E_r'(o) \cap E_r'(2e_1) \subset E_r,$$
    see Figure~\ref{capFig}. Hence, also the second condition in~\eqref{metaCond} is satisfied with high probability.

    \begin{figure}[!htpb]
    \begin{tikzpicture}
            \coordinate [label=below:$o$] (p) at (2.5, 2.5);
            \coordinate [label=below:$2e_1$] (q) at (4.5, 2.5);

            \fill[black] (2.5, 2.5) circle (2pt);
            \fill[black] (4.5, 2.5) circle (2pt);

        \draw[dashed, ultra thick] (0,0) rectangle (5,5);
        \draw[dashed, ultra thick] (2,0) rectangle (7,5);

        \draw[dashed] (1,1) rectangle (4,4);
        \draw[dashed] (3,1) rectangle (6,4);

        \draw  plot [smooth cycle, tension = 1.2] coordinates { (0.5,0.5) (2,0.2) (3,0.8) (4.7,.3) (4.1, 2)   (4.6,4.6) (2,4.1) (0.4,4.6) (0.8, 2) };
        \draw  plot [smooth cycle, tension = 1.1] coordinates { (2.3,0.3) (4,0.8) (5,0.2) (6.5,.5) (6.1, 2)   (6.6,4.6) (5,4.1) (4,4.7) (2.2,4.0) (2.8, 2) };

        \end{tikzpicture}
        \caption{Illustration of the event $E_r'(o) \cap E_r'(2e_1)$.}
        \label{capFig}
    \end{figure}
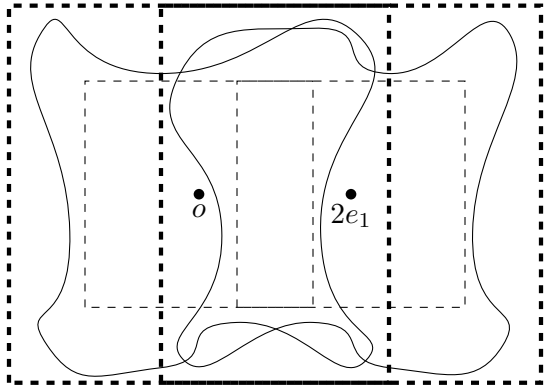
    
\end{proof}

\subsection{Proof of Theorem~\ref{rlaSingThm}}
 We apply Propositions~\ref{metaProp1} and~\ref{metaProp2} with $a_K = \theta_{\ms{Ber}, K}$, $r_\infty = 0$, $\la(r) \exp(-\la(r)r) = c$ and $s(r) = n$ for a suitably chosen value of $n \ge 1$.
\begin{proof}[Proof of Theorem~\ref{rlaSingThm}, convergence in bounded domains]
   We show the condition for Proposition~\ref{metaProp1}. For this, we prove convergence when conditioned on $S^* \cap Q_K$ and proceed in two steps. In the first step, we introduce an intermediate inhomogeneous Bernoulli bond percolation process on $S^* \cap Q_K$ where an edge $e$ is removed if there exist two successive points of $\vXls$ on $e$ of distance larger than $r$. We show that this intermediate percolation model converges weakly to the inhomogeneous Bernoulli percolation model on $S^* \cap Q_K$ where an edge $e$ is kept with probability $\exp(-c|e|)$. In particular, writing $\theta^{(r)}_K$ for the percolation probability of the intermediate model, 
    \begin{align}
        \label{weakSingEq}
    \lim_{r \tod 0} \theta^{(r)}_K = \theta_{\ms{Ber}, K}.
    \end{align}
    In the second step, we show that 
    \begin{align}
        \label{probSingEq}
    \lim_{r \tod 0} |\theta^{(r)}_K - \theta_K(\la(r), r)|= 0.
    \end{align}
    For~\eqref{weakSingEq} since both the intermediate and the target percolation model are Bernoulli bond percolation models, it suffices to prove convergence of the edge survival probabilities. Hence, let $e$ be an edge of $S^*$. 
    Since conditioned on $\L$, the points $\vXls$ form a Poisson point process, the distances between consecutive points on $e$ are iid $\ms{Exp}(\la(r))$-distributed random variables. In particular, the probability that the next Poisson point is of distance at most $r$ equals $1 - \exp(-\la(r) r)$.
    Moreover, by Poisson concentration the total number of points on $\vXls \cap e$ deviates at most by $\la(r)^{3/4}$ from the mean $\la(r) |e|$ with high probability. Thus, asymptotically the probability for $e$ to have no gaps of length at least $r$ is contained in the interval
    $$[(1 - \exp(-\la(r) r))^{\la(r) |e| + \la(r)^{3/4}}, (1-\exp(-\la(r) r))^{\la(r) |e| - \la(r)^{3/4}}].$$
    In the considered limiting regime, the upper and lower bounds converge to the common value
    $$\exp(-c|e|) = (\exp(-c))^{|e|}.$$

    To prove~\eqref{probSingEq}, we first note that with high probability on every edge there are Cox points at distance at most $r/2$ from the end points of the edge. In particular,
    $$\liminf_{r \tod 0} \theta_K(\la(r), r) - \theta^{(r)}_K \ge 0.$$
    For the upper bound, we fix $M \ge 1$ such that when writing $V_K$ for the set of vertices in $S^* \cap Q_K$, then $S^* \setminus (\cup_{v \in V_K} B_{Mr}(v))$ decomposes into connected components of mutual distance at least $r$. Then, with high probability for every $v \in V_K$ and $u \in B_{Mr/2}(v) \cap S^*$ there exists a Cox point at distance at most $r/2$ from $u$. In particular, if $\vG(\vXls)$ percolates beyond $\partial Q_K$, then so does the intermediate model. In other words,
    $$\limsup_{r \tod 0} \theta_K(\la(r), r) - \theta^{(r)}_K \le 0,$$
    as required.
\end{proof}

\begin{proof}[Proof of Theorem~\ref{rlaSingThm}, tightness]
    As usual, by stabilization, the first condition in~\eqref{metaCond} is satisfied for sufficiently large $n \ge 1$. Moreover, by definition of the intermediate percolation model introduced above, if $c$ is small, then asymptotically, the probability that in this model, no edges are removed in the cube $Q_{5n}$ is larger than $q_d$ provided that $r$ is sufficiently small. Moreover, by asymptotic essential connectedness, with high probability, $S\cap Q_{5n}$ contains a unique connected component intersecting both $\partial Q_{3n}$ and $\partial Q_{5n}$, and this component also intersects $Q_{n}$. In particular, also the second condition in~\eqref{metaCond} is satisfied for all sufficiently small $r > 0$. 
\end{proof}

 \subsection{Acknowledgments}
 This research was supported by the Leibniz program \emph{Probabilistic Methods for Mobile Ad-Hoc Networks}, by LMU Munich's Institutional Strategy LMUexcellent within the framework of the German Excellence Initiative and by Orange~S.A.~grant CRE G09292. The authors thank Nila Novita Gafur for her contribution in the simulation section.

\bibliography{../../wias}

\begin{thebibliography}{10}

\bibitem{vorPerc1}
D.~Ahlberg, S.~Griffiths, R.~Morris, and V.~Tassion.
\newblock Quenched {V}oronoi percolation.
\newblock {\em Adv. Math.}, 286:889--911, 2016.

\bibitem{aldous}
D.~J. Aldous.
\newblock Which connected spatial networks on random points have linear
  route-lengths?
\newblock {\em arXiv preprint arXiv:0911.5296}, 2009.

\bibitem{unim}
D.~Beringer, G.~Pete, and {\'A}.~Tim{\'a}r.
\newblock On percolation critical probabilities and unimodular random graphs.
\newblock {\em arXiv preprint arXiv:1609.07043}, 2016.

\bibitem{bartekPerc1}
B.~B{\l}aszczyszyn and D.~Yogeshwaran.
\newblock Clustering and percolation of point processes.
\newblock {\em Electron. J. Probab.}, 18:1--20, 2013.

\bibitem{bartekPerc2}
B.~B{\l}aszczyszyn and D.~Yogeshwaran.
\newblock Clustering comparison of point processes, with applications to random
  geometric models.
\newblock In V.~Schmidt, editor, {\em Stochastic Geometry, Spatial Statistics
  and Random Fields}, pages 31--71. Springer, Cham, 2015.

\bibitem{vorPerc2}
B.~Bollob\'as and O.~Riordan.
\newblock The critical probability for random {V}oronoi percolation in the
  plane is 1/2.
\newblock {\em Probab. Theory Related Fields}, 136(3):417--468, 2006.

\bibitem{stoyanstochastic}
S.~N. Chiu, D.~Stoyan, W.~S. Kendall, and J.~Mecke.
\newblock {\em Stochastic Geometry and Its Applications}.
\newblock J. Wiley \& Sons, Chichester, third edition, 2013.

\bibitem{daleyPPII2009}
D.~J. Daley and D.~Vere-Jones.
\newblock {\em An Introduction to the Theory of Point Processes. {V}ol. {II}}.
\newblock Springer, New York, second edition, 2008.

\bibitem{dz98}
A.~Dembo and O.~Zeitouni.
\newblock {\em Large Deviations Techniques and Applications}.
\newblock Springer, New York, second edition, 1998.

\bibitem{adhoc}
H.~D\"oring, G.~Faraud, and W.~K\"onig.
\newblock Connection times in large ad-hoc mobile networks.
\newblock {\em Bernoulli}, 22(4):2143--2176, 2016.

\bibitem{fleischer}
F.~Fleischer, C.~Gloaguen, V.~Schmidt, and F.~Voss.
\newblock Simulation of the typical {P}oisson-{V}oronoi--{C}ox-{V}oronoi cell.
\newblock {\em J. Stat. Comput. Simul.}, 79(7-8):939--957, 2009.

\bibitem{ginibrePerc}
S.~Ghosh, M.~Krishnapur, and Y.~Peres.
\newblock Continuum percolation for {G}aussian zeroes and {G}inibre
  eigenvalues.
\newblock {\em Ann. Probab.}, 44(5), 2016.

\bibitem{gloaguen2006fitting}
C.~Gloaguen, F.~Fleischer, H.~Schmidt, and V.~Schmidt.
\newblock Fitting of stochastic telecommunication network models via distance
  measures and {M}onte--{C}arlo tests.
\newblock {\em Telecommunication Systems}, 31(4):353--377, 2006.

\bibitem{Ha00}
O.~H\"aggstr\"om.
\newblock Markov random fields and percolation on general graphs.
\newblock {\em Adv. in Appl. Probab.}, 32(1):39--66, 2000.

\bibitem{gibbsPerc2}
S.~Jansen.
\newblock Continuum percolation for {G}ibbsian point processes with attractive
  interactions.
\newblock {\em Electron. J. Probab.}, 21:No. 47, 22, 2016.

\bibitem{poisBook}
G.~Last and M.~D. Penrose.
\newblock {\em Lectures on the Poisson Process}.
\newblock Cambridge University Press, Cambridge, 2017.

\bibitem{stab1}
S.~Lee.
\newblock The central limit theorem for {E}uclidean minimal spanning trees.
  {I}.
\newblock {\em Ann. Appl. Probab.}, 7(4):996--1020, 1997.

\bibitem{domProd}
T.~M. Liggett, R.~H. Schonmann, and A.~M. Stacey.
\newblock Domination by product measures.
\newblock {\em Ann. Probab.}, 25(1):71--95, 1997.

\bibitem{cPerc}
R.~Meester and R.~Roy.
\newblock {\em Continuum Percolation}.
\newblock Cambridge University Press, Cambridge, 1996.

\bibitem{delaunay}
D.~Neuh\"auser, C.~Hirsch, C.~Gloaguen, and V.~Schmidt.
\newblock Ratio limits and simulation algorithms for the {P}alm version of
  stationary iterated tessellations.
\newblock {\em J. Stat. Comput. Simul.}, 84(7):1486--1504, 2014.

\bibitem{OkBoSu92}
A.~Okabe, B.~Boots, and K.~Sugihara.
\newblock {\em Spatial Tessellations: Concepts and Applications of {V}orono\u\i
  \ Diagrams}.
\newblock J. Wiley \& Sons, Chichester, 1992.

\bibitem{pcPerc}
M.~D. Penrose.
\newblock On a continuum percolation model.
\newblock {\em Adv. in Appl. Probab.}, 23(3):536--556, 1991.

\bibitem{penrose}
M.~D. Penrose.
\newblock {\em Random Geometric Graphs}.
\newblock Oxford University Press, Oxford, 2003.

\bibitem{uniqFin}
M.~D. Penrose and A.~Pisztora.
\newblock Large deviations for discrete and continuous percolation.
\newblock {\em Adv. in Appl. Probab.}, 28(1):29--52, 1996.

\bibitem{PeWa08}
M.~D. Penrose and A.~R. Wade.
\newblock Multivariate normal approximation in geometric probability.
\newblock {\em J. Stat. Theory Pract.}, 2(2):293--326, 2008.

\bibitem{stab2}
M.~D. Penrose and J.~E. Yukich.
\newblock Limit theory for random sequential packing and deposition.
\newblock {\em Ann. Appl. Probab.}, 12(1):272--301, 2002.

\bibitem{stab3}
M.~D. Penrose and J.~E. Yukich.
\newblock Weak laws of large numbers in geometric probability.
\newblock {\em Ann. Appl. Probab.}, 13(1):277--303, 2003.

\bibitem{normGeomProb}
M.~D. Penrose and J.~E. Yukich.
\newblock Normal approximation in geometric probability.
\newblock In {\em Stein's Method and Applications}, volume~5 of {\em Lect.
  Notes Ser. Inst. Math. Sci. Natl. Univ. Singap.}, pages 37--58. Singapore
  Univ. Press, Singapore, 2005.

\bibitem{steiner}
R.~Schneider.
\newblock Integral geometric tools for stochastic geometry.
\newblock In {\em Stochastic Geometry}, volume 1892 of {\em Lecture Notes in
  Math.}, pages 119--184. Springer, Berlin, 2007.

\bibitem{sWeil}
R.~Schneider and W.~Weil.
\newblock {\em Stochastic and Integral Geometry}.
\newblock Springer, Berlin, 2008.

\bibitem{yukLDP}
T.~Schreiber and J.~E. Yukich.
\newblock Large deviations for functionals of spatial point processes with
  applications to random packing and spatial graphs.
\newblock {\em Stochastic Process. Appl.}, 115(8):1332--1356, 2005.

\bibitem{SeYu01}
T.~Sepp{\"a}l{\"a}inen and J.~E. Yukich.
\newblock Large deviation principles for {E}uclidean functionals and other
  nearly additive processes.
\newblock {\em Probab. Theory Related Fields}, 120(3):309--345, 2001.

\bibitem{spodarev2013stochastic}
E.~Spodarev.
\newblock {\em Stochastic Geometry, Spatial Statistics and Random Fields:
  Asymptotic Methods}.
\newblock Lecture Notes in Mathematics. Springer Berlin Heidelberg, 2013.

\bibitem{gibbsPerc1}
K.~Stucki.
\newblock Continuum percolation for {G}ibbs point processes.
\newblock {\em Electron. Commun. Probab.}, 18:no. 67, 10, 2013.

\bibitem{vorPerc3}
V.~Tassion.
\newblock Crossing probabilities for {V}oronoi percolation.
\newblock {\em Ann. Probab.}, 44(5):3385--3398, 2016.

\end{thebibliography}
\bibliographystyle{abbrv}

\end{document}